%%%%%%%%%%%%  TC preamble  %%%%%%%%%%%%%

\documentclass{amsart}
\usepackage{amssymb,amsmath,latexsym,amscd,amsfonts}
\usepackage{tikz}
\usepackage{graphics}
\usepackage{psfrag}
\usepackage{wasysym}
\usepackage{verbatim}

\usepackage[none,bottom]{draftcopy}

\def\ignore #1 {}

\newtheorem{thm}{Theorem}
\newtheorem{lem}[thm]{Lemma}
\newtheorem{alg}[thm]{Algorithm}
\newtheorem{defn}[thm]{Definition}
\newtheorem{prop}[thm]{Proposition}
\newtheorem{cor}[thm]{Corollary}
\newtheorem{conjecture}[thm]{Conjecture}

\theoremstyle{definition}

\newtheorem{example}[thm]{Example}

\newtheorem*{conj*}{Conjecture}

\def\Z{\mbox{{\bf Z}}}

\def\Z{\mbox{{$\mathbb{Z}$}}}

\def\x{\mbox{{\bf x}}}

%%%%%%%%%%%%  Jim's preamble is next  %%%%%%%%%%%%%

%\documentclass[12pt]{article}
\usepackage{hyperref}
\usepackage{dsfont}
\usepackage{amssymb}
\newcommand{\NN}{\mathds{N}}
\newcommand{\ZZ}{\mathds{Z}}
\newcommand{\QQ}{\mathds{Q}}
\newcommand{\RR}{\mathds{R}}

\begin{document}

\title
{Germ order for one-dimensional packings}

\author[A.~Abrams]{Aaron Abrams}
\author[H.~Landau]{Henry Landau}
\author[Z.~Landau]{Zeph Landau}
\author[J.~Pommersheim]{Jamie Pommersheim}
\author[J.~Propp]{James Propp}
\author[A.~Russell]{Alexander Russell}
\date{\today}
\begin{abstract}
Every set of natural numbers
determines a generating function convergent for $q \in (-1,1)$
whose behavior as $q \rightarrow 1^-$ determines a germ.
These germs admit a natural partial ordering
that can be used to compare sets of natural numbers
in a manner that generalizes both cardinality of finite sets
and density of infinite sets.
For any finite set $D$ of positive integers, call a set $S$ ``$D$-avoiding'' 
if no two elements of $S$ differ by an element of $D$.
We study the problem of determining, for fixed $D$,
all $D$-avoiding sets that are maximal in the germ order.
In many cases, we can show that there is exactly one such set.
We apply this to the study of one-dimensional packing problems.
\end{abstract}

\maketitle

\section{Introduction} \label{sec:intro}

This article is concerned with \emph{packing problems} and \emph{distance-avoiding set problems} in $\NN= \{0,1,2,\dots\}$.
A collection of nonempty finite subsets of $\NN$ is called a \emph{packing} if all sets in the collection are disjoint.
We restrict to the case where all sets in the collection are translates
of some fixed finite set $B \subset \NN$ 
(a \emph{packing body})
that has 0 as its smallest element;
we call such a packing a $B$-packing.
The challenge is to find $B$-packings
that cover as much of $\NN$ as possible.
Meanwhile, given some fixed finite set $D$ of positive integers (a set of \emph{forbidden distances}),
a set $S \subseteq \NN$ is called \emph{$D$-avoiding} 
if no two elements of $S$ differ by an element of $D$.
The challenge is to find $D$-avoiding sets that are as large as possible.
In both situations ($B$-packings and $D$-avoiding sets), 
our notion of ``as large as possible'' 
involves the germ-order mentioned in the title,
which refines both cardinality of finite sets and density of infinite sets.

The $B$-packing problem can be seen as a special case of the $D$-avoiding set problem for suitable $D$:
since the equations $b_1+s_1=b_2+s_2$ and $b_1-b_2=s_2-s_1$ are equivalent,
disjointness of the translates $B+s$ ($s \in S$) is equivalent to 
$S$ being $D$-avoiding for $D=\{|b-b'| : b,b'\in B, \: b\ne b'\}$.
(See also basic fact (\ref{product}) in the next section.)
Though we were originally motivated by the packing problem, our results apply to the general $D$-avoiding set problem.

The primary issue is to define what it means for the set $S$ to be ``as big as possible.''
For instance, consider the $D$-avoiding set problem with $D = \{3,5\}$.
Three $D$-avoiding sets are
\[
S = \{0,2,4,6,\dots\}, \quad
S' = \{1,3,5,7,\dots\}, \quad\text{and}\quad
S'' = \{0,1,2,8,9,10,\dots\}.
\]
(The third of these sets is obtained via a greedy algorithm 
for constructing $D$-avoiding sets that considers elements of $\NN$ in increasing order, 
including each element in the set if it does not introduce a forbidden distance with previous elements.
Alternatively, one can describe $S''$ as the lexicographically first infinite $D$-avoiding set.)
With respect to subset-inclusion, all three sets are maximal:
none of them can be augmented
without creating a distance belonging to $D$.
However, using a different partial order, we will find that 
$S$ is ``bigger'' than $S'$ which is in turn ``bigger'' than $S''$.
We define the partial order $\preceq$, called the \emph{germ order}, using the generating function
$$S_q:=  \sum_{n \in S} q^n.$$
\begin{defn}
For subsets $S,S'$ of $\NN$ write $$S' \preceq S$$ 
iff there exists $\epsilon > 0$ such that
$S'_q \leq S_q$ for all $q$ in the interval $(1-\epsilon,1)$.
In this case we say that $S$ \emph{dominates} $S'$
(and that $S'$ is dominated by $S$);
we sometimes say that $S$ is \emph{bigger than} or \emph{better than} $S'$.
\end{defn}  
That is, we compare sizes of sets $S \subseteq \NN$
by examining the germs ``at $1^-$'' of the corresponding generating functions $S_q$.
In particular the three $\{3,5\}$-avoiding sets $S$, $S'$, $S''$ defined above satisfy $S \succeq S' \succeq S''$.

This definition is reminiscent of Abel's method of evaluating divergent series,
in which one assigns to the series $\sum a_n$
the value $\lim_{q\to 1^-} \sum a_nq^n$
if this limit exists.  
Its application to measuring sets of natural numbers is (apparently) new,
but it is likely to hold little novelty for analytic number theorists,
who have long used the philosophically similar
but technically more recondite notion of Dirichlet density
to measure sets of primes.
Our definition also has thematic links to work from 
the earliest days in the study of infinite series.
For instance, Grandi's formula $1-1+1-1+1-1+\dots= 1/2$ corresponds to the fact 
that the germ of $(2\NN)_q$ exceeds the germ of $(2\NN+1)_q$ by $1/2 + O(1-q)$,
while Callet's formula $1+0-1+1+0-1+\dots= 2/3$ corresponds to the fact
that the germ of $(3\NN)_q$ exceeds the germ of $(3\NN+2)_q$ by $2/3 + O(1-q)$.

Our approach resembles the sort of ``tame nonstandard analysis''
in which $\RR$ is replaced by the ordered ring $\RR(x)$ 
where $1/x$ is a formal infinitesimal
(also known as ``the ring of rational functions ordered at infinity'');
our ordering of rational functions corresponds to that of $\RR(x)$
if one identifies $1/x$ with $1-q$.

This paper documents our search for germ-optimal 
solutions to distance-avoiding set problems.  We call a $D$-avoiding set $S$ a (the) \emph{winner} for $D$
if $S$ dominates $S'$ for every $D$-avoiding set $S'$.  A winner for $D$, if it exists, is necessarily unique,
as it is the maximum element of the poset of $D$-avoiding sets ordered by their germs at $1^-$.
It should however be borne in mind that a general poset
can have multiple maximal elements without having a maximum element,
and that some posets (like $\RR$ with its usual ordering)
have no maximal elements at all.

We will prove (Theorem~\ref{thm:avoid-rat}) that for all $D$, any $D$-avoiding set that is germ-maximal 
must be eventually periodic.
Since the germs of the eventually periodic sets are totally ordered,
this implies that there can be at most one germ-maximal $D$-avoiding set.
It follows that if every $D$-avoiding set is dominated by a maximal $D$-avoiding set, then there is a (unique) winner.

In some cases (Sections \ref{sec:examples}, \ref{sec:nonperiodic}) we can both prove existence of winners and give explicit constructions.
We do not know if a winner exists for every $D$, but in Section~\ref{sec:algorithms} we describe some strategies
for constructing winners, one of which shows that 
a sequence of local improvements of a certain kind must converge.  
We do not know that the limit is a winner, however.

\begin{conjecture} \label{conj:avoid-uniq}
For every finite set $D$ of positive integers,
there exists a winner for $D$.
\end{conjecture}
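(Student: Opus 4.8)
The plan is to exhibit the winner directly. By Theorem~\ref{thm:avoid-rat} and the total ordering of the germs of eventually periodic sets, it is enough to prove two statements: \emph{(I)} among the eventually periodic $D$-avoiding sets there is one, $T^\ast$, whose germ is maximal; and \emph{(II)} every $D$-avoiding set is dominated by some eventually periodic $D$-avoiding set. Granting these, an arbitrary $D$-avoiding $S$ satisfies $S\preceq V\preceq T^\ast$ for some eventually periodic $V$, so $T^\ast$ dominates every $D$-avoiding set and is the winner. (Together, (I) and (II) are equivalent to the hypothesis ``every $D$-avoiding set is dominated by a maximal one'' isolated above; splitting them apart simply makes the two difficulties easier to name.)

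The first thing to record is the coarse invariant. Let $f(n)$ be the largest size of a $D$-avoiding subset of a block of $n$ consecutive integers. Two blocks separated by a gap of at least $\max D$ impose no $D$-constraints on one another, so $f(m+n+\max D)\le f(m)+f(n)$; it follows that $f(n)/n$ converges to the maximal density $d^\ast=d^\ast(D)=\inf_n f(n)/n$ of a $D$-avoiding set, and the same separation idea applied to a periodic tiling gives $d^\ast n\le f(n)\le d^\ast n+\max D$ for all $n$. It is standard that $d^\ast$ is rational and realized by a periodic $D$-avoiding set $P$, of period $p$ say. From the Abel-type identity $S_q=(1-q)\sum_{N\ge 0}|S\cap[0,N]|\,q^N$ and the bounds on $f$, one gets $S_q\le\tfrac{d^\ast}{1-q}+\max D$ for every $D$-avoiding $S$, and $P_q\ge\tfrac{d^\ast}{1-q}-(p+d^\ast)$, both for $q$ near $1^-$; hence any $D$-avoiding set of upper density strictly below $d^\ast$ is dominated by $P$, and a winner, being eventually periodic by Theorem~\ref{thm:avoid-rat}, has density exactly $d^\ast$. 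So only $D$-avoiding sets of density $d^\ast$ remain in play.

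For (I): on an eventually periodic $D$-avoiding set $S$ of density $d^\ast$, the germ has the form $\tfrac{d^\ast}{1-q}+c_1(S)+c_2(S)(1-q)+\cdots$, and the estimate $|S\cap[0,N]|-d^\ast N\le d^\ast+\max D$ forces $c_1(S)\le d^\ast+\max D$. One then must show that $\sup c_1$ is attained over this family, that $\sup c_2$ is attained over the smaller family achieving that maximal $c_1$, and so on until a single germ is singled out; proving that these suprema are attained — in effect, controlling how large the period can be while the leading germ coefficients remain within a prescribed window of their extreme values — is the real content of (I). The resulting eventually periodic set is $T^\ast$, and by construction it dominates every eventually periodic $D$-avoiding set.

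For (II): given a $D$-avoiding $S$ of density $d^\ast$ (the only remaining case), one must show $S\preceq T^\ast$. This is automatic if $S$ is eventually periodic, so the essential case is $S$ of density $d^\ast$ that is not eventually periodic. Writing $S_q-T^\ast_q=(1-q)\sum_N\bigl(|S\cap[0,N]|-|T^\ast\cap[0,N]|\bigr)q^N$, each summand has absolute value at most $\max D$, so the two germs agree to leading order and the comparison is decided entirely by bounded, second-order fluctuations. I would try to close this either by a compactness/averaging argument — passing to weak-$\ast$ limits of the empirical measures of the shifts of $S$ to extract an eventually periodic (or at least shift-generic) $D$-avoiding set at least as good as $S$ near $1^-$, contradicting the maximality of $T^\ast$ — or by iterating the local-improvement operator of Section~\ref{sec:algorithms} on $S$ and showing that its germs increase to the germ of $T^\ast$. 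I expect this to be the main obstacle: averaging over shifts tends to destroy precisely the second-order term that carries the relevant information; the local-improvement process is known only to converge, not to converge to a global optimum; and nothing above rules out a non-eventually-periodic $D$-avoiding set whose finite truncations are simultaneously ``near-optimal'' and ``near-front-loaded'' along infinitely many scales, a set that would lie below an unbounded increasing chain of eventually periodic sets and leave no winner. Excluding this seems to require a uniform second-order discrepancy bound — roughly, $|S\cap[0,N]|\le d^\ast N+c_1(T^\ast)+o(1)$ for all $D$-avoiding $S$ — which the explicit constructions of Sections~\ref{sec:examples} and \ref{sec:nonperiodic} furnish in special cases but which we do not know how to prove in general.
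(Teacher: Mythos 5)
This statement is a \emph{conjecture} in the paper, not a theorem: the authors explicitly say they do not know whether a winner exists for every $D$, and the closest they come is Theorem~\ref{thm:avoid-rat} (every germ-maximal $D$-avoiding set is rational) together with Corollary~\ref{cor:uniq} (hence at most one such set). So there is no proof in the paper to compare yours against, and your proposal, by your own account, is not a proof either. Your reduction is essentially the one the paper already makes in the introduction (``if every $D$-avoiding set is dominated by a maximal $D$-avoiding set, then there is a (unique) winner''), refined into your (I) and (II); both halves remain open, and the obstruction you name at the end --- a non-eventually-periodic set sitting below an unbounded increasing chain $T_1\prec T_2\prec\cdots$ of eventually periodic sets with no common dominator --- is exactly the scenario the paper flags at the start of Section~\ref{sec:main} and again in Section~\ref{subsec:topology}, where the failure mode is made precise: one can have $|c|\succeq|c_n|$ on intervals $(1-\epsilon_n,1)$ with $\inf\epsilon_n=0$, so the comparisons cannot be glued. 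Your (I) has the same character: the suprema of the successive Laurent coefficients $c_1,c_2,\dots$ over eventually periodic sets of density $d^\ast$ need not be attained without a uniform bound on the period, and nothing in your sketch supplies one.

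Two smaller points. First, your subadditivity inequality is stated backwards: the separation trick (blocks at distance more than $\max D$ do not interact) gives \emph{super}additivity $f(m+n+\max D)\ge f(m)+f(n)$, while the subadditivity needed for Fekete's lemma is the trivial restriction bound $f(m+n)\le f(m)+f(n)$; the conclusion $f(n)/n\to d^\ast=\inf_n f(n)/n$ survives, but via the latter. Second, your claim that ``only sets of density $d^\ast$ remain in play'' silently discards sets with upper density $d^\ast$ but no density; showing such a set is dominated by the periodic optimizer $P$ requires an Abel-summability argument you have not given (the paper's basic fact (5) assumes the density exists). Neither of these is the main issue --- the main issue is that (I) and (II) are the conjecture, and you have correctly located rather than closed the gap.
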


Regarding periodicity, some of the winners we find in Section \ref{sec:examples} are not just 
eventually periodic but actually periodic.  Theorem~\ref{repeatable} gives a sufficient condition for winners 
to be periodic.  In other cases, however, ``boundary effects'' contribute to winners not being periodic from
the start.

%We also describe some general strategies for constructing winners (Section~\ref{sec:algorithms}).  One such strategy
%takes the form of a dynamic program that quickly narrows down a set of candidates which must include
%any germ-maximal elements (hence any winner) if they exist.  
%A different strategy begins with an arbitrary $D$-avoiding set and makes a 
%sequence of local improvements until the subset stabilizes at a potential winner.
 
%The proof we give may seem surprisingly complicated, given that
%the corresponding periodicity property for maximum-density packings
%and maximum-density distance-avoiding sets is fairly easy.
%This discrepancy is explained by the fact that
%the germ-topology does not admit compactness arguments.

%We conjecture that for both the packing and distance-avoiding problems,
%there is a unique optimum subset of $\NN$ (guaranteed to be eventually periodic).
% and that there an appreciable size gap
% between the optimum subset and all other subsets; see Conjectures 9 and 10.
% (Recall that an element $x$ of a non-Archimedean ordered ring extending $\RR$
% is said to be appreciable if there exist positive $r,s$ in $\RR$ with $r<x<s$.)
% We prove this when $D$ is of the form $\{1,2,\dots,k-1\}$,
% for which a greedy argument applies.
% We also resolve the case $D=\{3,5\}$ 
% by showing that the non-greedy $D$-avoiding set $\{0,2,4,6,\dots\}$
% is the unique germ-maximal $D$-avoiding set.

This contrasts with distance-avoiding set problems in the integers, where there are no boundary effects.  The existing literature on $D$-avoiding problems in dimension 1 is primarily focused on the boundaryless case, and the typical problem is to optimize the density of $D$-avoiding sets.  It is known in this case that for any finite $D$ there is a periodic $D$-avoiding set realizing the optimal density.
See the survey by Liu [Li] for a description of many results in this direction, including the determination of optimal densities for some $D$.

The motivation for our work was the study of disk packings.
It is our hope that the approach taken here will ultimately lead to results
establishing a strong kind of uniqueness
for optimal sphere-packings in dimensions 2, 8, and 24.
(See [Co] for a survey of the recent breakthroughs
in the study of 8- and 24-dimensional sphere-packing.)
We also hope that the germ approach will have relevance
to the study of densest packings in other dimensions.

For alternative approaches to measuring efficiency of packings, see [Ku].
The most sophisticated of these approaches is that of Bowen and Radin [Bo];
their ergodic theory approach has attractive features
(for instance, it works in spaces with nonamenable symmetry groups),
but it does not seem to work so well when the region being packed
is not the entire space.  Packings in $\NN$
could be viewed as special packings of $\RR^{\geq 0}$;
the lack of symmetry makes it hard to apply the constructions of Bowen and Radin.

See also [Be], [Bl], [Ch], and [Ka] for work on measuring sizes of sets
bearing some philosophical similarity to ours.

\section{Basic Facts}

We begin by recording some basic facts about the generating functions $S_q$.

\begin{enumerate}
\item
The boundedness of the coefficients of $S_q$
implies that $S_q$ converges for all complex $q$ inside the unit circle,
though we will only care about $q$ in the interval $(0,1)$.
\item
\label{product}
If the sets $B+s$ ($s \in S$) are disjoint,
then $(B+S)_q = B_q S_q$,
so maximizing the union of the $S$-translates of $B$
(with respect to germ-order)
is equivalent to maximizing the translation-set $S$.
\item
If $S$ is finite, $S_q = |S| + o(1)$, or equivalently,
$S_q \rightarrow |S|$ as $q \rightarrow 1^-$;
if $S$ is infinite, $S_q$ diverges as $q \rightarrow 1^-$.
\item
\label{arith}
If $S = \{a,a+d,a+2d,\dots\}$ with $a \geq 0$ and $d > 0$, then
\[
S_q = \left(\frac{1}{d}\right) \frac{1}{1-q} + \left(\frac{d-1-2a}{2d}\right) + O(1-q).
\]
\item
More generally, if $S$ is infinite with density $\alpha$, then
\[\label{Laurent}S_q = \alpha\, \frac{1}{1-q} + o\left(\frac{1}{1-q}\right).\]
One can prove this by considering the series $S_q - \alpha \frac{1}{1-q}$;
the assumption that $S$ has density $\alpha$
implies that the series is Cesaro summable,
hence Abel summable,
which implies that the series is $o(\frac{1}{1-q})$.

\item
The set $S$ is \emph{eventually periodic} iff there exist $N \in \NN$ and $d \geq 1$
such that for all $n \geq N$, $n \in S$ iff $n+d \in S$.  In this case $S_q := \sum_{n \in S} q^n$
is a rational function of $q$,
and indeed is of the form $P(q)/(1-q^d)$ for some polynomial $P$.
The converse is also true:
if $S_q$ is rational, then the Skolem-Mahler-Lech theorem tells us that
the set of indices $n$ such that the coefficient of $q^n$ in $S_q$ vanishes
is the union of a finite set and a union of finitely many arithmetic progressions,
which implies that $S$ is eventually periodic.
We call such sets $S$ \textbf{rational}.
Note that this usage coincides with the notion of rationality
for subsets of a monoid in automata theory, specialized to the monoid $\NN$.
When $S$ is rational, $S_q$ has a simple pole at 1,
and letting $t = 1-q$ 
we can expand $S_q$ as a Laurent series $\sum_{n \geq -1} a_n t^n$
where $a_{-1}$ is the density of $S$.
\item
If $S$ is not eventually periodic, then as we noted above
$S_q$ is not rational.
This fact, combined with the fact that the coefficients of $S_q$
belong to the finite set $\{0,1\}$,
allows us to apply a classic result of Szeg\H{o} (see e.g.~[TW])
to conclude that $S_q$ is transcendental
and has the unit circle as its natural boundary.
\end{enumerate}

We now turn to some basic facts about the germ order $\preceq$.

\begin{enumerate}
\setcounter{enumi}{7}
\item 
It is easy to see that $\preceq$ is indeed a partial order.
In the case where $S$ and $S'$ are finite,
the germ-ordering refines ordering by cardinality.
If $S$ and $S'$ have finite but nonempty symmetric difference,
then $S_q$ and $S'_q$ differ by a nonzero polynomial,
and since a polynomial can change sign only finitely often,
one of the sets must strictly dominate the other in the germ-ordering.
Likewise, if $S$ and $S'$ are both eventually periodic and distinct,
$S_q$ and $S'_q$ differ by a nonzero rational function,
and once again one set must strictly dominate the other in the germ-ordering.
Thus $\preceq$ is a total order on the rational subsets of $\NN$,
and by \eqref{Laurent} above it refines the preorder given by comparing densities.
\item
Although $\preceq$ is a total ordering for rational subsets of $\NN$, 
the same is not true for unrestricted subsets of $\NN$;
for instance, if $S$ is the set of natural numbers 
whose base ten expansion has an even number of digits
and $S'$ is its complement,
then it can be shown that $S$ and $S'$ are $\preceq$-incomparable.
\item
For every nonempty $S$, 
$(S+1)_q = q S_q \prec S_q$,
so germs are emphatically not translation-invariant.
\end{enumerate}

\section{Examples of periodic winners} \label{sec:examples}

In this section we give a sufficient condition for a winner to exist.  Under this condition the winner is always periodic.  This condition is not necessary, however, as shown by the examples in Section \ref{sec:nonperiodic} of sets $D$ for which there exists a winner that is not periodic.
We show in Section \ref{sec:main} that winners are always eventually periodic.

For $D$ a fixed finite set, we write $\|D\|$ for the largest element of $D$.
In this section we will often find it convenient to
refer to sets $S\subset\NN$ in terms of their indicator functions; 
thus we will often view $S$ as an element of $\{0,1\}^{\NN}$ 
and will correspondingly write $S$ and subsets of $S$ of the form 
$S\cap[a,b]$ as bit strings (of length $b-a+1$ in the latter case).
We denote the concatenation of bit strings $A$ and $B$ (with $A$ finite)
by $AB$. This slight abuse of notation should cause no confusion.

\subsection{Some periodic winners, including the symmetric case} \label{subsec:periodic}

A $D$-avoiding bit string $R$ that is finite and has length greater than $\|D\|$
is called {\bf repeatable} if the concatenation $RR$ is $D$-avoiding. Note that in this case the infinite 
string $RRR\cdots$ is also $D$-avoiding.

\begin{thm}\label{repeatable}
Fix $D$.  If there is an integer
$m>\|D\|$ such that the (germ-)maximal $D$-avoiding string of length $m$ is repeatable, 
then the infinite string obtained by repeating this string is the winner.  In particular the
winner exists and is periodic.
\end{thm}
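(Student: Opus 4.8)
The plan is to show that the periodic set $S = RRR\cdots$ (where $R$ is the germ-maximal $D$-avoiding string of length $m$) dominates every $D$-avoiding set $S'$. Since $S$ is periodic, its generating function $S_q$ has the form $P(q)/(1-q^m)$, and by the basic facts its Laurent expansion at $q = 1$ begins with $\alpha/(1-q)$ where $\alpha = |R \cap [0,m-1]|/m$ is the density. The key observation is that $R$ being the \emph{maximal} string of length $m$ should force $\alpha$ to be at least as large as the density of any $D$-avoiding set, and moreover that among density-$\alpha$ sets $S$ should win on the constant term as well.

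First I would set up the comparison windowwise. Given an arbitrary $D$-avoiding set $S'$, chop $\NN$ into consecutive blocks $[0,m-1], [m,2m-1], [2m,3m-1], \dots$. Write $S'_q = \sum_{k \ge 0} q^{km} T_k(q)$ where $T_k(q) = \sum_{n \in S' \cap [km,(k+1)m-1]} q^{n-km}$ is the generating polynomial of the $k$-th block, shifted to start at $0$. Each block $S' \cap [km,(k+1)m-1]$, viewed as a bit string of length $m$, is $D$-avoiding on its own (it is a sub-string of a $D$-avoiding set), so by maximality of $R$ we have $T_k(q) \le R_q$ for all $q$ in some interval $(1-\epsilon_k, 1)$ — but a priori $\epsilon_k$ depends on $k$, which is exactly where the argument needs care. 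Since $S'_q = \sum_k q^{km} T_k(q)$ and $S_q = \sum_k q^{km} R_q = R_q/(1-q^m)$, if we could use a \emph{uniform} $\epsilon$ we would get $S'_q \le S_q$ immediately on $(1-\epsilon,1)$.

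The main obstacle is precisely this uniformity: there are only finitely many $D$-avoiding bit strings of length $m$, so the polynomials $T_k(q)$ range over a finite set $\{Q_1, \dots, Q_r\}$ of polynomials, each satisfying $Q_i(q) \le R_q$ near $1$; since $R_q - Q_i(q)$ is a nonzero polynomial for each $i$ (or identically $0$, in which case there is nothing to prove for that $i$), it has finitely many zeros, so a single $\epsilon > 0$ works for all of them simultaneously. This resolves the issue and gives $S'_q \le S_q$ on $(1-\epsilon,1)$, hence $S' \preceq S$. I would also need to check that $R$ is genuinely repeatable is used somewhere — it is: repeatability is what guarantees that $S = RRR\cdots$ is actually $D$-avoiding, so that $S$ is a legitimate competitor and the claim "$S$ is the winner" is not vacuous. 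The final step is to note that a winner is unique (it is the maximum of the poset), so "the winner exists and is periodic" follows, completing the proof.

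One subtlety worth flagging: I should make sure the block $R$ achieving the maximum at length $m$ really does beat every length-$m$ $D$-avoiding string \emph{in the germ order}, not merely in density; this is given by hypothesis (the germ-maximal string), and since the germ order is a total order on finite strings of a fixed length — they differ by polynomials — this maximum is well-defined. I expect the write-up to be short once the finiteness-of-the-polynomial-family observation is in place; that observation is the crux and the only place where anything nonobvious happens.
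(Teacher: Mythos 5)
Your proposal is correct and follows essentially the same route as the paper: decompose the challenger into length-$m$ blocks, note each block is itself a $D$-avoiding string hence germ-dominated by $R$, and use the finiteness of the set of possible block polynomials to extract a single $\epsilon$ valid for all blocks simultaneously. You also correctly identify the two supporting points (repeatability makes $RRR\cdots$ a legitimate competitor, and uniqueness of the maximum gives ``the'' winner), so nothing is missing.
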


An example will demonstrate the idea of the proof.  
With $D=\{3,5\}$ and window size $m=8$, 
the best $D$-avoiding string is $10101010$, which is repeatable.
Theorem~\ref{repeatable} asserts that as a consequence of this,
the infinite periodic string $S$ with repetend $10101010$, i.e., the set of even integers, is the winner. 
To justify this, consider any challenger $S'$ and 
compare $S'$ to $S$ in positions 1--8, then in positions 9--16, then 17--24, etc.  
The set $S$ wins (or ties) every time, hence $S$ dominates $S'$.  
Note that in this example choosing $m=6$ doesn't work, as the best $D$-avoiding 
string of length $6$ is $111000$ which is not repeatable.

\smallskip
\noindent
{\bf Proof of Theorem~\ref{repeatable}:}
Let $m$ be as in the statement of the theorem,
and let $S$ be the set
corresponding to the periodic infinite string whose repetend is the germ-maximal $D$-avoiding string of length $m$.
Let $S'$ be the set corresponding to any other $D$-avoiding infinite string.
Then $|S|_q - |S'|_q$ can be written as the sum
$\sum_{i=0}^{\infty} q^{im} (p(q) - p'_i(q))$ 
where $p(q)$ and $p'_i(q)$ ($i \geq 0$) are polynomials of degree at most $m-1$ 
with all coefficients equal to 0 or 1. 
Since there are only
finitely many possibilities for the coefficients
of $p(q) - p'_i(q)$,
and since the coefficients of $p(q)$
form the germ-maximal $D$-avoiding string of length $m$,
there exists a fixed $\epsilon>0$ such that
for all $i$, the polynomial $p(q) - p'_i(q)$
is either identically zero or else
positive on the interval $(1-\epsilon,1)$.
The claim follows.
$\square$

\smallskip

Theorem~\ref{repeatable} applies to many, but definitely not all, sets $D$.  
For instance we call $D$ {\bf symmetric} if there is an integer $k > \|D\|$ such 
that $i\in D$ iff $k-i\in D$.  
(In other words $D$ is symmetric if $-D$ is a translate of $D$, specifically $-D = D-k$.)  
We call the number $k$ the {\bf offset of symmetry} for $D$.
Corollary \ref{cor:periodic} below shows that Theorem~\ref{repeatable} applies to all symmetric sets.
\begin{example}
Consider $D = \{1,3,6,8\}$, which is symmetric with offset of symmetry $k = 9$.  A union of congruence classes modulo 9 is $D$-avoiding if and only if the corresponding vertices form an independent set in the  \emph{circulant graph} shown in Figure \ref{fig:circulant}.  Here the edges join classes which differ by $\pm1(=\!\pm8)$ or $\pm3(=\!\pm6)$.
\begin{figure}
    \centering
    \begin{tikzpicture}
    \foreach \x in {0,...,8}
    {
        \draw (40*\x:2) -- (40+40*\x :2);
        \draw (40*\x:2) -- (120+40*\x :2);
    }    
    \foreach \x in {0,...,8}
        \draw [fill=white] (40*\x:2)  node[anchor=180+40*\x]{$\x$} circle (2pt);
    \end{tikzpicture}
    \caption{The germ-maximal independent set of vertices in this circulant graph is $\{0,2,4\}$.}
    \label{fig:circulant}
\end{figure}
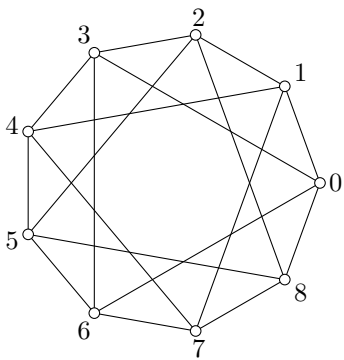
The germ-maximal independent set in this graph is $\{0,2,4\}$, whose corresponding bit string is both optimal among length 9 strings and repeatable, so by Theorem \ref{repeatable} the set $\{n \in \NN: n \equiv 0,2,4 \ (\mbox{mod 9})\}$ is the winner.
\end{example}

\begin{cor} \label{cor:periodic}
If $D$ is symmetric then there is a periodic winner, with period dividing the offset of symmetry.
\end{cor}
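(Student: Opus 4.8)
The plan is to invoke Theorem~\ref{repeatable} by exhibiting, for a symmetric set $D$ with offset of symmetry $k$, a window size $m > \|D\|$ for which the germ-maximal $D$-avoiding string of length $m$ is repeatable. The natural choice is to take $m$ to be a large multiple of $k$, say $m = Nk$ for $N$ large. The key observation driving the argument is a \emph{reflection symmetry}: if $R$ is a $D$-avoiding bit string of length $m$, then its reversal $\overline{R}$ (read right-to-left) is also $D$-avoiding, since the distances occurring in $\overline{R}$ are exactly those occurring in $R$. Moreover reversal preserves cardinality, so it fixes the germ up to an error term; but to get strict control we need reversal to actually \emph{improve} (weakly) the germ, or to pin down the maximizer exactly.

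First I would analyze the germ of a length-$m$ string as a function of where its $1$'s sit: writing $R$ as a subset $\{a_1 < \dots < a_r\}$ of $\{0,1,\dots,m-1\}$, we have $R_q = \sum q^{a_j} = r - (1-q)\sum a_j + O((1-q)^2)$, so among strings with the \emph{same number $r$ of ones}, the germ-maximal ones are exactly those minimizing $\sum a_j$, i.e.\ those whose ones are pushed as far left as possible; reversal replaces $\sum a_j$ by $\sum (m-1-a_j) = r(m-1) - \sum a_j$. Next I would argue that a germ-maximal $D$-avoiding string of length $m$ is, in a suitable sense, \emph{left-justified modulo $k$}: roughly, in any block of $k$ consecutive positions the pattern used should be a germ-maximal independent set in the circulant graph $C_k(D)$ (the graph on $\ZZ/k\ZZ$ with $i \sim j$ when $i - j \in \pm D$), and symmetry of $D$ means this circulant graph has a reflection automorphism, so its germ-maximal independent set is symmetric under that reflection. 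Then the periodic string with that repetend is $D$-avoiding (because an independent set of $C_k(D)$ lifts to a $D$-avoiding subset of $\NN$, using $\|D\| < k$), and one checks it agrees with the germ-maximal length-$m$ string for $m = Nk$ once $N$ is large enough that boundary discrepancies are swamped. Being periodic with period $k$, this string is visibly repeatable, so Theorem~\ref{repeatable} applies and delivers a periodic winner whose period divides $k$.

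The main obstacle I anticipate is making precise the claim that the germ-maximal length-$m$ $D$-avoiding string is eventually periodic ``block by block'' with repetend the germ-maximal independent set of $C_k(D)$. The subtlety is that pushing ones to the left (to minimize $\sum a_j$ at fixed count $r$) can conflict with maximizing the count $r$ itself, and the $D$-avoidance constraint couples adjacent blocks; one must show that for $m$ a large multiple of $k$ the optimal count is $N$ times the independence number of $C_k(D)$ (up to a bounded additive term handled by the boundary), and that among strings achieving it the left-justified one wins the germ comparison. I would handle this by a direct exchange/subadditivity argument: partition $\{0,\dots,m-1\}$ into $N$ blocks of length $k$, observe that each block's contribution to any $D$-avoiding set is an independent set of $C_k(D)$ up to interactions across block boundaries (of which there are only $O(1)$ per boundary since $\|D\| < k$), and conclude that the count and the first-moment $\sum a_j$ are both optimized, up to $O(N)$-bounded slack absorbed for $N$ large, by repeating the symmetric germ-maximal circulant independent set. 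The reflection symmetry of $D$ is exactly what guarantees this repetend is itself symmetric, which is the feature that lets the same string be simultaneously germ-optimal and repeatable.
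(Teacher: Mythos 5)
There is a genuine gap here, and it sits exactly where you flagged it: the claim that the germ-maximal $D$-avoiding string of length $Nk$ is ``left-justified modulo $k$,'' i.e.\ block-periodic with repetend a germ-maximal independent set of the circulant graph, is the whole content of the corollary and is not established by your sketch. The proposed repair --- an exchange/subadditivity argument in which count and first moment are ``optimized up to $O(N)$-bounded slack absorbed for $N$ large'' --- cannot work as stated, because the germ order is not determined by the number of ones and the first moment up to bounded error: it is a lexicographic order on all the coefficients of the Laurent expansion in $1-q$, so two candidates can agree on $r$ and on $\sum a_j$ and still be strictly ordered by higher-order terms, and no amount of ``$N$ large'' swamps an exact tie at the leading orders. (Relatedly, your statement that among strings with a fixed number $r$ of ones the germ-maximal ones are \emph{exactly} those minimizing $\sum a_j$ is only a necessary condition, not a characterization.) Your use of the symmetry of $D$ is also misplaced: you invoke it to make the optimal circulant independent set reflection-symmetric, but that property is neither needed for the lifted periodic string to be $D$-avoiding (that already follows from $\|D\|<k$ for any $D$) nor sufficient to identify the germ-maximizer of length $Nk$.

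The symmetry hypothesis is meant to be used much more directly, and it makes the corollary a two-line consequence of Theorem~\ref{repeatable} with window size $m=k$ (note $k>\|D\|$ by definition of the offset of symmetry). The point is that \emph{every} $D$-avoiding string $s$ of length $k$ is repeatable: if $S\subseteq\{0,\dots,k-1\}$ is the corresponding set and $ss$ failed to be $D$-avoiding, there would be $x\in S$ and $y\in S+k$ with $y-x\in D$; by symmetry $k-(y-x)=x-(y-k)\in D$, and since $y-k\in S$ and $x\in S$ this contradicts that $S$ itself is $D$-avoiding. In particular the germ-maximal $D$-avoiding string of length $k$ is repeatable, so Theorem~\ref{repeatable} yields a periodic winner with period dividing $k$. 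You should abandon the $m=Nk$ detour and the circulant-graph optimization entirely; all you need is this one observation about how $-D$ being a translate of $D$ converts a forbidden distance across the seam of $ss$ into a forbidden distance inside $s$.
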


\begin{proof}
Fix symmetric $D$ and let $k$ be the offset of symmetry.  We claim
that every $D$-avoiding string of length $k$ is repeatable, so in particular the best
$D$-avoiding string of length $k$ is repeatable.  Thus 
Theorem~\ref{repeatable} applies with $m=k$.

To see why this is true, let $s$ be a $D$-avoiding string of length $k$ and consider
the string $ss$, 
whose two halves correspond to the sets $S$ and $S+k$. 
Suppose this is not $D$-avoiding.  Then there must be elements
$x$ in $S$ and $y$ in $S+k$ with $y-x\in D$.  
By symmetry we have $k-(y-x)=x-(y-k)\in D$.  But $y\in S+k$ so $y-k\in S$, and since $x\in S$, this contradicts that $S$ is $D$-avoiding.
\end{proof}

For example, if $D=\{1,2,\ldots,k-1\}$, then Corollary \ref{cor:periodic} implies that there is a winner, which is easily seen to consist of the multiples of $k$.

\begin{example} \label{ex:nonsymmetric}
For non-symmetric $D$, it is still sometimes possible to apply Theorem~\ref{repeatable}.  
For instance suppose $D=\{1,2,n\}$.  
If $n\not\equiv 0 \mod 3$, then the winner is clearly the periodic sequence with repetend $100$.  
For $D=\{1,2,3n\}$ with $n > 1$, one can show that 
the best $D$-avoiding string of length $3n+1$ is $(100)^n 0$.
This is repeatable, so Theorem~\ref{repeatable} 
tells us we have the winner.
Note however that because $D$ is not symmetric, $D$-avoiding strings $s$ of this (or any) 
length are not guaranteed to be repeatable. So, some 
analysis is required (a) to determine exactly which one is best and (b) to verify that 
it happens to be repeatable.

Some other $D$'s for which we can manually determine that the optimal sequence
(of some length $m$) is repeatable:\\
$D=\{1,3,4\}\ (m=7, \ s=1010000)$ \\
$D=\{2,3,5\}\ (m=7, \ s=1100000)$ \\
$D=\{2,3,6\}\ (m=9, \ s=110001000)$ \\
$D=\{2,3,7\}\ (m=10,\ s=1100011000)$ \\
$D=\{3,4,7\}\ (m=10,\ s=1110000000)$ \\
Interestingly, in each of these cases the optimum is obtained by the greedy algorithm.  
\end{example}

Clearly it would be helpful to have a bound on the window size.
In each example above we have $m\in D+D$, but we do not know if this must always be the case, 
even for sets of the form $D=\{a,b,a+b\}$ corresponding to packings of the body $B=\{0,a,a+b\}$.
%% Is this clsim of ignorance true?

In section~\ref{sec:nonperiodic} we show that there exist 3-element $D$'s for which the optimal 
$D$-avoiding set is not periodic (but rather only eventually periodic).
Thus Theorem~\ref{repeatable} cannot be used, and in particular, for such a $D$, 
there can be no (finite) window size $m$ such that the optimal $D$-avoiding string 
of length $m$ is repeatable.

We remark that if the packing body $B$ is symmetric,
then the associated distance set $D$ need not be symmetric;
e.g., consider $B = \{0,1,4,5\}$ and $D = \{1,3,4,5\}$.
Also, the reverse is possible: consider $B = \{0,1,3\}$ (nonsymmetric)
and $D = \{1,2,3\}$ (symmetric).

\section{Structure of winners:  Eventual periodicity} \label{sec:main}

Although we believe (see Conjecture~\ref{conj:avoid-uniq}) that
for every $D$ there is a maximal $D$-avoiding set,
it is a priori conceivable that there is no germ-maximal $D$-avoiding set;
that is, there might exist $D$-avoiding sets
$T_1 \prec T_2 \prec T_3 \prec \dots$
but no $D$-avoiding set that dominates them all.

The closest we have come to proving Conjecture~\ref{conj:avoid-uniq}
is the following result:

\begin{thm} \label{thm:avoid-rat}
For every finite set $D$ of positive integers,
every germ-maximal $D$-avoiding set is rational.
\end{thm}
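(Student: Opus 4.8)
The plan is to argue by contraposition: suppose $S$ is a $D$-avoiding set that is not rational, equivalently (by basic fact (6)) not eventually periodic, and produce a $D$-avoiding set $S'$ that strictly dominates $S$ in the germ order. This will show no germ-maximal $D$-avoiding set can fail to be rational. The engine of the argument should be a ``local surgery'' that looks at a long but finite window of $S$ and replaces the tail with something periodic without creating a forbidden distance, gaining (or at least not losing) in the germ order, and gaining strictly in at least one case.

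First I would set up the right notion of window. For a $D$-avoiding set $S$, the state at position $n$ is the finite bit string $S \cap [n-\|D\|, n-1]$; this determines which elements may legally be added at positions $\ge n$, so two positions with the same state are ``interchangeable'' as cut points. There are only finitely many possible states, so the sequence of states occurring in $S$ has some state $\sigma$ that recurs infinitely often. If $S$ were periodic-from-$N$ whenever two equal states occurred at positions in the same residue class and with the ``same future,'' we'd be done; since $S$ is not eventually periodic, for the recurring state $\sigma$ the futures following its occurrences are not all equal, so there are two occurrences of $\sigma$, say at positions $a < b$, such that the tails $S \cap [a,\infty)$ and $S \cap [b,\infty)$ differ. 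Now consider the three candidate modifications of $S$: keep $S$ unchanged on $[0,a)$ and then (i) paste in the tail starting at $b$ but shifted back to start at $a$, or (ii) paste in the block $S\cap[a,b)$ repeated forever, or similar cut-and-reglue operations. Because the state at both $a$ and $b$ is $\sigma$, each such spliced set is still $D$-avoiding: no forbidden distance can straddle the seam, since legality of additions past the seam depends only on the state $\sigma$, and within each glued piece the original $D$-avoiding property is preserved.

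Next I would compare germs. Writing $t = 1-q$, for a rational tail the generating function has a Laurent expansion $\alpha/t + \beta + O(t)$ with $\alpha$ the density; for the irrational $S$ one at least has control on finite windows. The cleanest route: among all $D$-avoiding sets obtained from $S$ by such a finite surgery, one can choose the cut points to maximize, window by window, in the manner of the proof of Theorem~\ref{repeatable} — compare $S$ against the modified set block-by-block of length $b-a$ after position $a$, and since the modified set is eventually periodic with a repetend that was chosen to be (weakly) better in the germ order than what $S$ does on the corresponding blocks, the modified set dominates $S$. To get \emph{strict} domination one uses that $S$ is not eventually periodic: the modified set differs from $S$, and since the modified set is rational while $S$ is not, $S_q$ and $S'_q$ differ by a non-rational function — but here I need instead that the difference is eventually of one sign. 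This is the delicate point.

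The main obstacle is exactly the last step: manufacturing a surgery that is provably \emph{strictly} germ-increasing, rather than merely non-decreasing. The difference $S'_q - S_q$ for two $D$-avoiding sets need not have eventually constant sign in general (basic fact (9) shows incomparable pairs exist), so I cannot simply wave at ``one must dominate the other.'' The resolution I would pursue is to phase the argument in two stages. Stage one: if $S$ is not eventually periodic, show it is \emph{weakly} dominated by some rational $D$-avoiding set, by a compactness/greedy argument choosing, among the finitely many states, the one whose best legal rational continuation has the largest density, then largest second Laurent coefficient, etc., and splicing it in at a recurrence of that state — this forces an eventually-periodic competitor whose Laurent expansion is lexicographically $\ge$ the (Cesàro-averaged) behavior of $S$, hence weakly dominates. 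Stage two: argue that this competitor can be taken distinct from $S$ (it is rational, $S$ is not), and that weak domination by a \emph{rational} set, combined with distinctness, upgrades to the conclusion we want: since the set of rational sets is totally ordered by $\preceq$ (basic fact (8)), the maximal one among all rational sets dominating $S$ is a genuine rational set strictly above $S$ unless $S$ itself is already dominated by nothing rational other than along a chain — in which case the chain has a rational upper bound and $S$ is not maximal. I expect the bookkeeping to route through the Laurent coefficients $(\alpha, \beta, \dots)$ of rational tails and a lexicographic ``best continuation from a given state'' optimization; the technical heart is proving that such a best continuation exists (finitely many states, but the optimization over rational continuations from a state needs its own fixed-point or finiteness argument, plausibly another appeal to Theorem~\ref{repeatable}-style block comparison on the space of states).
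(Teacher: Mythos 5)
Your setup is close to the paper's: the proof there also uses a finite-window coding (block length $\max(D)+1$), picks a letter that recurs infinitely often, and cuts $S$ at its recurrences into ``circular words'' $c_1\!:\!c_2\!:\!c_3\!:\!\cdots$, exploiting exactly your observation that splicing at two occurrences of the same state preserves $D$-avoidance. You have also correctly located the crux — comparability — but your proposed resolution has a genuine gap. Your Stage one needs two things that are not available. First, a ``best legal rational continuation from a state'' is not known to exist: the rational continuations from a given state form an infinite totally ordered family with no a priori maximum, and producing one is essentially Conjecture~\ref{conj:avoid-uniq} itself, which the paper leaves open. Second, and more fundamentally, even granting a periodic competitor $c^*$ with $c^* \succeq c_i$ for every block $c_i$ of $S$, the inference that $c^*\!:\!c^*\!:\!\cdots$ dominates $c_1\!:\!c_2\!:\!\cdots$ fails: the blocks between recurrences have unbounded lengths, so the thresholds $\epsilon_i$ witnessing $|c^*| \geq |c_i|$ on $(1-\epsilon_i,1)$ need not be bounded away from $0$, and the intersection of the intervals can be empty. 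This is precisely the obstruction described in Section~\ref{subsec:topology}, and it is why the paper proves only Theorem~\ref{thm:avoid-rat} rather than the existence of winners. Comparing leading Laurent coefficients, as you suggest, only controls densities and cannot substitute for an actual inequality of functions on an interval $(1-\epsilon,1)$. (Your Stage two, by contrast, is essentially fine: a rational $S' \succeq S$ with $S$ irrational is automatically distinct from $S$, and $\preceq$ is antisymmetric, so $S' \succ S$.)

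The paper's proof sidesteps the infinite splice entirely by using only \emph{finite} surgeries, for which comparability is automatic. Swapping two adjacent circular words $c_i, c_{i+1}$ changes $S$ by a finite symmetric difference, so the generating functions differ by a nonzero polynomial, which has constant sign near $1$; maximality then forces $c_1 \succeq c_2 \succeq \cdots$ (Lemma~\ref{lem:lyndon}). Then, by pigeonhole, arbitrarily far out there are finite stretches $c_i\!:\!\cdots\!:\!c_j$ whose total length is a multiple of that of $c_1$; replacing such a stretch by copies of $c_1$ is again a finite surgery, so maximality together with Lemma~\ref{lem:swap} forces $|c_1\!:\!\cdots\!:\!c_1| = |c_i\!:\!\cdots\!:\!c_j|$, hence $c_i = \cdots = c_j = c_1$, and the monotonicity propagates this equality back to $c_1 = \cdots = c_N$ for every $N$. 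Maximality is thus used to force structure on $S$ directly, never to compare $S$ against an independently built infinite competitor. If you want to repair your argument, restrict every modification to one with finite symmetric difference from $S$.
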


Note that Example~\ref{example:nonper} shows that the theorem cannot be strengthened to assert that
the germ-maximal sets must be periodic. 

Theorem \ref{thm:avoid-rat}, in combination with the fact that the rational sets are totally ordered under the germ ordering, implies

\begin{cor} \label{cor:uniq}
For every finite set $D$ of positive integers,
there is at most one germ-maximal $D$-avoiding set.
\end{cor}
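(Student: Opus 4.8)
The plan is to obtain Corollary~\ref{cor:uniq} as a purely formal consequence of Theorem~\ref{thm:avoid-rat} together with basic fact~(8) of Section~2, which together say that any germ-maximal $D$-avoiding set must lie inside the class of rational subsets of $\NN$, and that $\preceq$ restricts to a \emph{total} order on that class. No new analytic input is needed beyond what has already been established.

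First I would argue by contradiction. Suppose $S_1$ and $S_2$ are two distinct germ-maximal $D$-avoiding sets. By Theorem~\ref{thm:avoid-rat} both $S_1$ and $S_2$ are rational, so by basic fact~(8) their germs are $\preceq$-comparable. Moreover, since $S_1 \neq S_2$, the difference $(S_1)_q - (S_2)_q$ is a nonzero rational function of $q$, hence of constant, nonzero sign on some interval $(1-\epsilon,1)$; thus exactly one of $S_1 \prec S_2$ and $S_2 \prec S_1$ holds. Without loss of generality assume $S_1 \prec S_2$. Then $S_2$ is a $D$-avoiding set that \emph{strictly} dominates $S_1$, contradicting the germ-maximality of $S_1$. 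Therefore $S_1 = S_2$, which is the assertion of the corollary.

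I do not anticipate any genuine obstacle in carrying this out: all of the difficulty is concentrated in Theorem~\ref{thm:avoid-rat}, which we are free to assume. The only points that merit careful phrasing are (i) spelling out precisely what ``germ-maximal'' means, namely that there is no $D$-avoiding set lying strictly $\succ$ it in the germ order, so that a strict domination really does produce a contradiction; and (ii) recording explicitly — as in basic fact~(8) — that two distinct rational sets are never germ-equivalent, so that comparability of their germs immediately upgrades to \emph{strict} comparability. With those two observations in place, the proof is a two-line deduction.

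(One may optionally remark, for emphasis, that in a general poset distinct maximal elements need only be pairwise incomparable; the content here is exactly that Theorem~\ref{thm:avoid-rat} rules this out by confining all germ-maximal $D$-avoiding sets to a totally ordered sub-poset.)
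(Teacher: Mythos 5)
Your proposal is correct and is exactly the paper's argument: Theorem~\ref{thm:avoid-rat} places every germ-maximal $D$-avoiding set in the class of rational sets, on which basic fact~(8) says $\preceq$ is a total order with distinct sets strictly comparable, so two distinct germ-maximal sets would contradict each other's maximality. Nothing to add.
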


Our approach to proving Theorem~\ref{thm:avoid-rat} uses a block coding 
of the kind often employed in dynamical systems theory.
We prepare for the proof by introducing this coding
and proving two helpful lemmas.

Let $m = \max(D) + 1$ and replace the indicator sequence of $S$
(an element of $\{0,1\}^\NN$) by a symbolic sequence
using a block code of block length $m$,
with an alphabet containing (at most) $2^m$ symbols, which we will call {\bf letters}.
More concretely, if the indicator sequence of $S$ is written as $(b_0,b_1,b_2,\dots)$
(where $b_n$ is 1 or 0 according to whether $n \in S$ or $n \not\in S$),
then we define the {\bf $m$-block encoding} of $(b_0,b_1,b_2,\dots)$
to be $(w_0,w_1,w_2,\dots)$
where the letter $w_n$ is the $m$-tuple $(b_n,b_{n+1},\dots,b_{n+m-1})$;
we call $w_n$ a {\bf consonant} or a {\bf vowel}
according to whether $b_n = 1$ or $b_n = 0$
(conditions that align with the respective cases $n \in S$ and $n \not \in S$).
Say that a letter $\alpha=(b_1,\dots,b_m)$ in $\{0,1\}^m$ is {\bf legal}
if the set $\{i: b_i=1\}$ is $D$-avoiding;
we let $\mathcal{A}$ be the set of legal letters.
Given two letters $\alpha$ and $\alpha'$ in $\mathcal{A}$,
say that $\alpha'=(b'_1,\dots,b'_m)$ is a {\bf successor} of $\alpha=(b_1,\dots,b_m)$
iff $b'_i = b_{i+1}$ for $1 \leq i \leq m-1$.
For every set $S \subseteq \NN$,
the associated block-encoding $w = (w_0,w_1,w_2,\dots)$
has the property that for all $n \geq 1$,
$w_n$ is a successor of $w_{n-1}$;
$S$ is $D$-avoiding if and only if $w$
has the additional property that every letter $w_n$ is legal.
Call such an infinite word $(w_0,w_1,w_2,\dots)$ {\bf $D$-legal};
there is a one-to-one correspondence between
$D$-avoiding sets and $D$-legal infinite words,
and finding a germ-maximal $D$-avoiding set is equivalent to finding 
a $D$-legal infinite word 
for which the set of locations of consonants is germ-maximal.
We write $w \preceq w'$ iff the associated sets $S,S'$ satisfy $S \preceq S'$.

% Suppose $S$ is some $D$-avoiding subset of $\NN$
% that is germ-maximal in the collection of $D$-avoiding subsets of $\NN$.
% Let $w=(w_0,w_1,\dots)$ be the associated infinite word in $\mathcal{A}^\NN$.
% Assume for simplicity that the letter $w_0 = \alpha$ occurs infinitely often in $w$.
% (The last paragraph of the proof addresses what happens
% if this assumption fails.)

Let $S$ be a $D$-avoiding subset of $\NN$, and let
$w=(w_0,w_1,\dots)$ be the associated infinite word in $\mathcal{A}^\NN$.

Let $\alpha$ be a letter that occurs infinitely often in the word $w$, and let
$K = \{k \in \NN: w_k = \alpha \} = \{k_0, k_1, k_2, \dots\}$,
where $k_0 < k_1 < k_2 < \cdots$.
After possibly deleting a prefix, we divide the infinite word $w$ into infinitely many subwords
$(w_{k_0},w_{k_0+1},\dots,w_{k_1-1})$,
$(w_{k_1},w_{k_1+1},\dots,w_{k_2-1})$,
$(w_{k_2},w_{k_2+1},\dots,w_{k_3-1})$, \dots.
Each of these finite words 
is associated with the word
$c_k := (w_{k_{i-1}},w_{k_{i-1}+1},\dots,w_{k_{i}-1},w_{k_{i}})$ (for $k \geq 1$)
that both begins and ends with the letter $\alpha$;
define a {\bf circular word} as a word whose first and last letters are the same.
(Note that we are not modding out by cyclic shift of such words.)
Let $\mathcal{C}$ be the set of all circular words beginning and ending with $\alpha$.
We define the {\bf length} of a circular word to be the number of letters it contains,
counting its first and last letter as a single letter.
(Thus, if $\alpha$, $\beta$, and $\gamma$ are letters, 
the circular word $(\alpha,\beta,\gamma,\alpha)$,
which for brevity we may aso write as
$\alpha \beta \gamma \alpha$, has length 3.)
If $c \in \mathcal{C}$ has length $a$ and $c' \in \mathcal{C}$ has length $a'$,
let $c\!:\!c'$ denote the circular word of length $a+a'$ in $\mathcal{C}$
obtained by deleting the final $\alpha$ from $c$ and then concatenating with $c'$.
The operation $:$ is associative, and indeed,
the word $w$ itself can be written as $p\!:\!c_1\!:\!c_2\!:\!c_3\!:\!\dots$,
where $p$ is a possibly empty prefix and the circular words $c_i$ are {\bf primitive}
(i.e., each $c_i$ contains $\alpha$ only at the beginning and at the end).
%We also use ``:'' to denote concatenation of noncircular words. 

Every circular word $c \in \mathcal{C}$ is associated 
with a polynomial $P_c = P_c(q)$
(sometimes we will omit the subscript or will write $P_i$ to mean $P_{c_i}$)
whose degree is at most the length $a$ of the circular word $c$
and whose coefficients are 0's and 1's according to whether 
the respective letters in the circular word are vowels or consonants;
we call $P_c$ the {\bf generating function} of $c$.
So if $w = c_1 \!:\! c_2 \!:\! c_3 \!:\! \dots$ is the $D$-legal infinite word
representing the $D$-avoiding set $S$, $S_q$ can be written as 
$P_1 + q^{a_1} P_2 + q^{a_1+a_2} P_3 + \dots = P_1 + A_1 P_2 + A_1 A_2 P_3 + \dots$
where $a_i$ is the length of $c_i$ and $A_i$ is $q^{a_i}$.

For any circular word $c$ with length $a$, we define $|c| := P_c(q) / (1 - q^a)$;
$|c|$ can equivalently be defined as the generating function 
of the infinite periodic word $c\!:\!c\!:\!c\!:\!c\!:\!\dots$.
Given two periodic words $c,c'$ in $\mathcal{C}$ (possibly of different lengths),
write $c \preceq c'$ iff $|c| \preceq |c'|$; 
call this the germ-ordering on circular words.
It is clear from the second way of defining $|c|$
that $|c| = |c\!:\!c| = |c\!:\!c\!:\!c| = \dots$.
We have $|c| = |c'|$ iff $c\!:\!c\!:\!c\!:\!\dots = c'\!:\!c'\!:\!c'\!:\!\dots$.

The following two Lemmas are the linchpins of the proof of Theorem~\ref{thm:avoid-rat}.

\begin{lem} \label{lem:swap}
If $c \preceq c'$, then
$c \preceq c\!:\!c' \preceq c'\!:\!c \preceq c'$.
\end{lem}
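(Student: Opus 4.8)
The plan is to reduce everything to one elementary comparison between two rational functions and then bootstrap via the identities already recorded for $|c|$. Recall that for a circular word $c$ of length $a$ we have $|c| = P_c(q)/(1-q^a)$, and that $|c\!:\!c\!:\!\cdots| = |c|$; also, if $c$ has length $a$ and $c'$ has length $a'$, then the generating function of $c\!:\!c'$ is $P_c + q^a P_{c'}$ and its length is $a+a'$, so
\[
|c\!:\!c'| \;=\; \frac{P_c + q^a P_{c'}}{1-q^{a+a'}}.
\]
The first observation is that $|c\!:\!c'|$ is a ``weighted average'' of $|c|$ and $|c'|$: writing $A=q^a$, $A'=q^{a'}$, one checks the algebraic identity
\[
\frac{P_c + A\,P_{c'}}{1-AA'} \;=\; \lambda\,\frac{P_c}{1-A} \;+\; (1-\lambda)\,\frac{A\,P_{c'}}{1-A'}\cdot\frac{1}{A}\cdot\frac{(1-A)}{?}
\]
— rather than push that through, the cleaner route is to clear denominators. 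I would prove directly that
\[
|c\!:\!c'| - |c| \;=\; \frac{(1-q^a)(P_c + q^a P_{c'}) - (1-q^{a+a'})P_c}{(1-q^a)(1-q^{a+a'})}
\;=\; \frac{q^a\bigl(P_{c'} - q^{a'}P_{c'} - P_c + q^a \cdot(\ldots)\bigr)}{(1-q^a)(1-q^{a+a'})},
\]
and after simplification the numerator factors as $q^a(1-q^{a})\bigl(|c'|-|c|\bigr)\cdot(\text{positive factor})$ in the germ sense; concretely, a short computation gives
\[
|c\!:\!c'| - |c| \;=\; \frac{q^a(1-q^{a})}{1-q^{a+a'}}\,\bigl(|c'| - |c|\bigr).
\]
Since the prefactor $q^a(1-q^a)/(1-q^{a+a'})$ is strictly positive on $(0,1)$, the hypothesis $c \preceq c'$ (i.e.\ $|c|\le|c'|$ near $1^-$) yields $|c| \preceq |c\!:\!c'|$. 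By the symmetric computation with the roles of $c$ and $c'$ and of the two ``ends'' swapped — using that $c'\!:\!c$ has the same length $a+a'$ and generating function $P_{c'} + q^{a'}P_c$ — one gets $|c'\!:\!c| - |c'| = \dfrac{q^{a'}(1-q^{a'})}{1-q^{a+a'}}(|c|-|c'|) \preceq 0$, hence $|c'\!:\!c| \preceq |c'|$.

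It remains to compare the two middle terms, $c\!:\!c' \preceq c'\!:\!c$. Here I would use the identity $|c\!:\!c'| = |(c\!:\!c')\!:\!(c\!:\!c')\!:\!\cdots|$ together with the fact that $(c\!:\!c')^{:2}$ and $(c'\!:\!c)^{:2}$ have the same length $2(a+a')$ and their generating functions differ by a transparent amount. Explicitly, $(c\!:\!c')\!:\!(c\!:\!c')$ has generating function $P_c + q^aP_{c'} + q^{a+a'}P_c + q^{2a+a'}P_{c'}$, while $(c'\!:\!c)\!:\!(c'\!:\!c)$ has $P_{c'} + q^{a'}P_c + q^{a+a'}P_{c'} + q^{a+2a'}P_c$; subtracting and dividing by the common $1-q^{2(a+a')}$, the difference simplifies (pulling out $1-q^{a+a'}$ from numerator and denominator) to
\[
|c'\!:\!c| - |c\!:\!c'| \;=\; \frac{(q^{a'}-q^a) - (q^{a+a'}-q^{a+a'})\cdots}{\cdots} \;=\; \frac{q^{a'}(1-q^{a}) - q^{a}(1-q^{a'})}{1-q^{a+a'}}\bigl(\text{with }P_c,P_{c'}\text{ weights}\bigr),
\]
and after collecting terms one finds $|c'\!:\!c| - |c\!:\!c'| = \dfrac{(q^{a'}-q^{a})}{1-q^{a+a'}}\bigl(|c|-|c'| \text{-type expression}\bigr)$ whose sign, for $q$ near $1^-$, is governed by the product of the sign of $q^{a'}-q^a$ and the sign of $|c|-|c'|$. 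Since $c\preceq c'$ makes the latter $\le 0$, the claim $c\!:\!c'\preceq c'\!:\!c$ follows provided I have the combinatorial bookkeeping right. Chaining the three inequalities $c\preceq c\!:\!c'$, $c\!:\!c'\preceq c'\!:\!c$, $c'\!:\!c\preceq c'$ gives the lemma.

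The main obstacle I anticipate is purely bookkeeping: getting the exact factorization of the numerators correct, in particular isolating the factor $(1-q^{a+a'})$ so that the remaining quotient genuinely equals $|c'|-|c|$ up to a manifestly positive rational function on $(0,1)$. There is a subtlety in that $|c|$ and $|c'|$ have poles at $q=1$, so "$|c|\le|c'|$ near $1^-$" must be interpreted as the germ inequality in Definition~1 and the positivity of the prefactors must be checked on the punctured interval $(1-\epsilon,1)$, not at $q=1$ itself; but since each prefactor is a ratio of polynomials that is positive and bounded away from $0$ on $(0,1)$, multiplying the germ inequality by it preserves the germ order, so this causes no real difficulty. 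A cleaner alternative to the brute-force algebra, which I would try first, is the "compare in blocks" argument used in the proof of Theorem~\ref{repeatable}: view $w = c\!:\!c\!:\!\cdots$ versus $w' = (c\!:\!c')\!:\!(c\!:\!c')\!:\!\cdots$ and match them up period-by-period of length $a+a'$ — but because the periods are misaligned this needs the averaging identity anyway, so the algebraic route above is likely the most honest one.
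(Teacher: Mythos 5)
Your overall strategy is the same as the paper's: express $|c|=P/(1-A)$, $|c'|=P'/(1-A')$, $|c\!:\!c'|=(P+AP')/(1-AA')$, $|c'\!:\!c|=(P'+A'P)/(1-AA')$ with $A=q^a$, $A'=q^{a'}$, clear denominators, and reduce each inequality to the hypothesis $P(1-A')\preceq P'(1-A)$. Your treatment of the two endpoint inequalities is essentially correct, though the prefactors you display are slightly off: the correct identities are
\[
|c\!:\!c'|-|c|=\frac{A(1-A')}{1-AA'}\bigl(|c'|-|c|\bigr),\qquad
|c'\!:\!c|-|c'|=\frac{A'(1-A)}{1-AA'}\bigl(|c|-|c'|\bigr),
\]
with $1-A'$ (resp.\ $1-A$) in the numerator rather than $1-A$ (resp.\ $1-A'$). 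Since either version is positive on $(0,1)$, this typo does not affect the conclusion, and your remark about interpreting everything as a germ inequality on $(1-\epsilon,1)$ is correct.

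The genuine gap is the middle inequality $c\!:\!c'\preceq c'\!:\!c$. Your computation there trails off, and the factorization you guess -- a prefactor proportional to $q^{a'}-q^{a}$ whose sign depends on whether $a<a'$ or $a>a'$ -- is wrong; if it were right, the asserted sign analysis would fail whenever $a<a'$ (so that $q^{a'}-q^{a}<0$ on $(0,1)$), and the lemma makes no assumption relating $a$ and $a'$. The detour through the doubled words $(c\!:\!c')\!:\!(c\!:\!c')$ is also unnecessary: $c\!:\!c'$ and $c'\!:\!c$ already have the same length $a+a'$, hence the same denominator $1-AA'$, so
\[
|c'\!:\!c|-|c\!:\!c'|=\frac{(P'+A'P)-(P+AP')}{1-AA'}=\frac{P'(1-A)-P(1-A')}{1-AA'}=\frac{(1-A)(1-A')}{1-AA'}\bigl(|c'|-|c|\bigr),
\]
whose prefactor is unconditionally positive on $(0,1)$; this is exactly how the paper handles that step (identical denominators reduce the comparison to $P+AP'\preceq P'+A'P$, which is the hypothesis rearranged). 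With that one-line computation substituted for your sketch, the proof is complete and coincides with the paper's.
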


\noindent
{\bf Proof:}
Write $|c| = P/(1-A)$ and $|c'| = P'/(1-A')$;
we also have $|c\!:\!c'| = (P + AP') / (1-AA')$
and $|c'\!:\!c| = (P' + A'P) / (1-AA')$.
The stipulated relation $c \preceq c'$ is equivalent to
$P/(1-A) \preceq P'/(1-A')$, or
\begin{equation}
P(1-A') \preceq P'(1-A);
\end{equation}
the desired relations $c \preceq c\!:\!c'$, 
$c\!:\!c' \preceq c'\!:\!c$, and $c'\!:\!c \preceq c'$
are respectively equivalent to
\begin{equation}
P/(1-A) \preceq (P + AP') / (1-AA'),
\end{equation}
\begin{equation}
(P + AP') / (1-AA') \preceq (P' + A'P) / (1-AA'), \ \mbox{and}
\end{equation}
\begin{equation}
(P' + A'P) / (1-AA') \preceq P'/(1-A').
\end{equation}
To prove (2), note that (by cross-multiplying, expanding, and cancelling terms)
we can write it equivalently as $- AA'P \preceq AP' - AP - AAP'$,
which is just (1) multiplied by $A$.
The two denominators in (3) are identical,
so (3) is equivalent to $P + AP' \preceq P' + A'P$,
which in turn is equivalent to (1).
The proof of (4) is similar to the proof of (2).
$\square$

\medskip

Note that the proof also tells us that if $c \prec c'$, 
then $c \prec c\!:\!c' \prec c'\!:\!c \prec c'$.

\begin{lem} \label{lem:lyndon}
If the concatenation $w = c_1\!:\!c_2\!:\!c_3\!:\! \dots$ 
is germ-maximal in the set of $D$-legal words,
then we must have $c_1 \succeq c_2 \succeq c_3 \succeq \dots$ in the germ-ordering.
\end{lem}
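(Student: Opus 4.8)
The plan is to argue by contradiction: suppose $w = c_1\!:\!c_2\!:\!c_3\!:\!\cdots$ is germ-maximal among $D$-legal words but the sequence of circular words is \emph{not} weakly decreasing in the germ-ordering. Since any two circular words (being periodic, hence rational) are comparable by basic fact (8), this means there is some index $i$ with $c_i \prec c_{i+1}$. The idea is to swap these two adjacent circular words — replacing the factor $c_i\!:\!c_{i+1}$ by $c_{i+1}\!:\!c_i$ — and show this produces a strictly larger $D$-legal word, contradicting maximality. The swapped word $w' = p\!:\!c_1\!:\!\cdots\!:\!c_{i-1}\!:\!c_{i+1}\!:\!c_i\!:\!c_{i+2}\!:\!\cdots$ is still $D$-legal: every $c_j$ begins and ends with the same fixed letter $\alpha$, so concatenating them in any order along the $:$ operation yields a word in which consecutive letters are still successors and every letter is still legal (legality of a letter and the successor relation are local conditions, and the gluing always happens at an $\alpha$-to-$\alpha$ seam). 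This is the step that makes the whole argument work and is, I expect, the main thing to get right — one must check that $c_{i+1}\!:\!c_i$ really is a legal circular word and that splicing it back in respects the successor constraint at both ends, which it does precisely because both $c_i$ and $c_{i+1}$ start and end with $\alpha$.

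Next I would quantify the improvement. Writing $S_q$ in the form $P_1 + A_1 P_2 + A_1 A_2 P_3 + \cdots$ from the excerpt, the effect of swapping $c_i$ and $c_{i+1}$ is entirely local: the prefix contribution $P_1 + \cdots + A_1\cdots A_{i-2}P_{i-1}$ is unchanged, the common factor $A_1 \cdots A_{i-1}$ can be pulled out, and the tail $A_1\cdots A_{i+1}P_{i+2} + \cdots$ is also unchanged because $A_i A_{i+1} = q^{a_i + a_{i+1}} = q^{a_{i+1}+a_i}$ is symmetric under the swap. So $S_q - S'_q$ equals $A_1\cdots A_{i-1}$ times the difference between the generating function of $c_i\!:\!c_{i+1}$ and that of $c_{i+1}\!:\!c_i$ (each divided by $1 - A_iA_{i+1}$ if one prefers to phrase it via the $|\cdot|$ notation — but since here we have genuine finite polynomial factors followed by an identical common tail, it is cleanest to compare the finite polynomials $P_i + A_iP_{i+1}$ and $P_{i+1} + A_iP_i$ directly). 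The leading factor $A_1\cdots A_{i-1} = q^{a_1+\cdots+a_{i-1}}$ is positive on $(0,1)$, so the sign of $S_q - S'_q$ near $q = 1^-$ is governed by the sign of $(P_i + A_iP_{i+1}) - (P_{i+1} + A_iP_i) = (1 - A_i)(P_{i+1} - P_i)$, or equivalently, after dividing by the common positive $(1-A_iA_{i+1})$ denominators, by the relation appearing in Lemma~\ref{lem:swap}.

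Then I would invoke Lemma~\ref{lem:swap} (and the strict version noted immediately after its proof): from $c_i \prec c_{i+1}$ we get $c_i\!:\!c_{i+1} \prec c_{i+1}\!:\!c_i$, i.e.\ $|c_i\!:\!c_{i+1}| \prec |c_{i+1}\!:\!c_i|$. Because the two circular words $c_i\!:\!c_{i+1}$ and $c_{i+1}\!:\!c_i$ have the same length $a_i + a_{i+1}$, the comparison of their germs $|c_i\!:\!c_{i+1}|$ and $|c_{i+1}\!:\!c_i|$ is literally the comparison of the polynomials $P_i + A_iP_{i+1}$ and $P_{i+1} + A_iP_i$ over the common positive denominator $1 - A_iA_{i+1}$ for $q$ near $1$; hence $P_{i+1} + A_iP_i - (P_i + A_iP_{i+1}) > 0$ on some interval $(1-\epsilon, 1)$. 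Combined with the sign computation of the previous paragraph, this gives $S'_q > S_q$ on $(1-\epsilon,1)$, so $w \prec w'$, contradicting germ-maximality of $w$. Therefore no such index $i$ exists and $c_1 \succeq c_2 \succeq c_3 \succeq \cdots$.

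A couple of small points I would make sure to address cleanly in the writeup: first, the decomposition $w = c_1\!:\!c_2\!:\!\cdots$ presupposes discarding a possibly nonempty prefix $p$, but since $S_q$ and $(p\!:\!c_1\!:\!c_2\!:\!\cdots)_q$ differ only in finitely many coefficients, germ-maximality of $w$ as stated is equivalent to germ-maximality after prepending $p$, and the swap argument applies verbatim to the tail; second, one should note that the letter $\alpha$ used to define the $c_i$ occurs infinitely often in $w$, which is automatic since $\mathcal{A}$ is finite and $w$ is infinite, so at least one letter repeats infinitely often and we may take $\alpha$ to be any such letter. Neither point is an obstacle, but stating them keeps the argument airtight. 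The genuine content is the reduction of the infinite comparison to the finite, local one, which the structure of the $:$ operation and the telescoping form $P_1 + A_1P_2 + \cdots$ make almost immediate, and then Lemma~\ref{lem:swap} does the rest.
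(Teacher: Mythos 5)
Your proposal is correct and follows essentially the same route as the paper: swap the two adjacent circular words (legality is preserved because the splice happens at an $\alpha$-to-$\alpha$ seam), observe that the resulting word differs from $w$ in only finitely many positions so the two are comparable, and reduce the comparison to the local one $P_i + A_iP_{i+1}$ versus $P_{i+1} + A_{i+1}P_i$, which is exactly relation (3) of Lemma~\ref{lem:swap}. One small slip to fix in the writeup: the generating function of $c_{i+1}\!:\!c_i$ is $P_{i+1} + A_{i+1}P_i$, not $P_{i+1} + A_iP_i$, so the factorization $(1-A_i)(P_{i+1}-P_i)$ is only valid when $a_i = a_{i+1}$; the argument is unaffected since the decisive inequality is correctly delegated to Lemma~\ref{lem:swap}.
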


\noindent
{\bf Proof:}
We will show that $c_1 \succeq c_2$ 
since that contains the idea of the general argument.
If $c_1 = c_2$ there is nothing to prove, so assume $c_1 \neq c_2$,
and let $w' = c_2\!:\!c_1\!:\!c_3\!:\! \dots$, which must be $D$-legal if $w$ is
(indeed, the whole reason for the block coding was to make this claim true).
The sets $S$ and $S'$ respectively associated with $w$ and $w'$
have finite symmetric difference, so $w$ and $w'$ must be comparable.
Since we are assuming $w$ is germ-maximal,
we must have $w \succeq w'$ in the germ ordering.
That is, we must have $$P_1 + A_1 P_2 \succeq P_2 + A_2 P_1$$
(all the later terms match up and cancel).
But this is equivalent to $P_1 / (1 - A_1) \succeq P_2 / (1 - A_2)$, 
so $c_1 \succeq c_2$ as claimed. $\square$

\medskip

\noindent
{\bf Proof of Theorem~\ref{thm:avoid-rat}:}
Let $S$ be a $D$-avoiding subset of $\NN$
that is germ-maximal.
Let $w=(w_0,w_1,\dots)$ be the associated infinite word in $\mathcal{A}^\NN$.
Suppose first that the letter $w_0 = \alpha$ occurs infinitely often in $w$, and let $c_1,c_2,\ldots$ be the primitive circular words starting and ending with $\alpha$ with $w=c_1\!:\!c_2\!:\!c_3\!:\!\dots$.

By an easy pigeonhole argument, for all $N$ there must exist $i,j \geq N$ with $i < j$
such that the sum of the lengths of the words $c_i$, $c_{i+1}, \dots, c_j$
is a multiple of the length of $c_1$, say $r$ times the length of $c_1$.
Let $w'$ be the word obtained from $w$
by replacing the $j-i+1$ letters $c_i$, $c_{i+1}, \dots, c_j$
by $r$ occurrences of the letter $c_1$.
Let $S$ and $S'$ be the sets associated with $w$ and $w'$, respectively.
Lemma~\ref{lem:lyndon} tells us that $c_1 \succeq c_i \succeq c_{i+1} \succeq \dots \succeq c_j$,
so repeated application of Lemma~\ref{lem:swap} gives
$|c_1\!:\!c_1\!:\!\dots\!:\!c_1| \succeq |c_{i}\!:\!c_{i+1}\!:\!\dots\!:\!c_{j}|$.
If strict inequality holds, then $w' \succ w$, contradicting maximality of $w$.
(Here we use the fact that the difference $S'_q - S_q$
can be expressed as $1-q^n$ times
$|c_1\!:\!c_1\!:\!\dots\!:\!c_1| - |c_{i}\!:\!c_{i+1}\!:\!\dots\!:\!c_{j}|$,
where $n$ is the common value of $ra_1$ and $a_i+a_{i+1}+\dots+a_j$.)
So we must have
$|c_1\!:\!c_1\!:\!\dots\!:\!c_1| = |c_{i}\!:\!c_{i+1}\!:\!\dots\!:\!c_{j}|$,
implying that $c_i,c_{i+1},\dots,c_j$ are all the circular word $c_1$.
Since the circular words $c_i$ are in germ-decreasing order,
this means that $c_1,c_2,\dots,c_N$ are all equal.
Since this is true for all $N$, we must have $w = c_1\!:\!c_1\!:\!\dots c_1$;
that is, $w$ is periodic.

If the letter $w_0$ does not occur infinitely many times in $w$, then we instead find the smallest $i$ for which
the letter $w_i$ does occur infinitely often in $w$.
Ignoring the prefix $w_0,\dots,w_{i-1}$, we may apply the preceding argument
to the letters $w_i,w_{i+1},w_{i+2},\dots$.
The conclusion now is that the word $w$ is eventually periodic, with the periodicity beginning at the letter $w_i$.
$\square$ 

\section{Some non-periodic winners} \label{sec:nonperiodic}

We now demonstrate another technique for proving that
strings are winners, different from the method of section~\ref{sec:examples}.  It is not clear how broadly this approach can be applied.  The following infinite family of examples was discovered by Abrams' student Eric Zhang.

% \begin{prop}\label{example:nonper}
% Let $D=\{2, 4, 6k+1 \}$ for a positive integer $k$. 
% The winner for $D$ is 
% \[ S^* = \underbrace{110000\cdots 110000}_{\mbox{length }6k}\ 100\text{}100\cdots . \]
% \end{prop}

% In particular, the winner is not periodic.
% \begin{proof}
% We split all strings into \emph{bytes} of length 6.
% Define two $D$-avoiding strings of length $6k$ as follows:
% \begin{align*}
%     A &= \underbrace{110000\cdots 110000}_{k\mbox{ bytes}} \\
%     B &= \underbrace{100100\cdots 100100}_{k\mbox{ bytes}}.
% \end{align*}

\begin{prop}\label{example:nonper}
Let $D=\{2, 4, 6k+1 \}$ for a positive integer $k$. 
The winner for $D$ is 
\[ S^* = (110000)^k\ 100100\ 100100\ \cdots . \]
\end{prop}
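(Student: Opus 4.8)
The plan is to prove something stronger than $S^*\succeq S$ for every $D$-avoiding set $S$: I will show the pointwise comparison of counting functions $|S\cap[0,n)|\le|S^*\cap[0,n)|$ for all $n\ge0$. This is enough, because of the Abel-type identity
\[
S^*_q - S_q \;=\; \frac{1-q}{q}\sum_{n\ge 0}\bigl(|S^*\cap[0,n)| - |S\cap[0,n)|\bigr)q^n ,
\]
valid for $q\in(0,1)$: if every coefficient on the right is $\ge 0$, then in fact $S^*_q\ge S_q$ on all of $(0,1)$, hence $S\preceq S^*$. So the argument reduces to (i) verifying that $S^*$ itself is $D$-avoiding, and (ii) the counting inequality, which carries all the difficulty.

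Step (i) is routine casework on residues. The prefix $(110000)^k$ consists of integers $\equiv 0,1\pmod 6$ lying within $6k-5$ of each other, so their pairwise differences avoid $2$ and $4$ (wrong residue mod $6$) and $6k+1$ (too large); the tail consists of integers $\equiv 0\pmod 3$, whose differences are multiples of $3$ and hence miss all of $D$; and any difference between a prefix element and a tail element is at least $5$ and, by a quick residue check, never equal to $6k+1$.

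For step (ii) note that $|S^*\cap[0,n)| = 2\lfloor n/6\rfloor + c(n\bmod 6)$ with $c=(0,1,2,2,2,2)$ when $n\le 6k$, and $|S^*\cap[0,n)| = 2k+\lceil (n-6k)/3\rceil$ when $n\ge 6k$. When $n\le 6k$ the forbidden distance $6k+1$ is vacuous inside $[0,n)$, so $S\cap[0,n)$ is merely a $\{2,4\}$-avoiding subset of an interval; a window-of-$6$ count (each block of $6$ consecutive integers holds at most $2$ elements, a leftover block of $r$ holds at most $c(r)$) gives $|S\cap[0,n)|\le 2\lfloor n/6\rfloor+c(n\bmod 6)=|S^*\cap[0,n)|$. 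When $n=6k+s$ with $s\ge1$, write $S = 2E\sqcup(2O+1)$, where $E,O\subseteq\NN$ are $\{1,2\}$-avoiding (equivalently, their consecutive gaps are $\ge3$) and, because $6k+1$ is odd, satisfy the cross-constraints $O\cap(E+3k)=\emptyset$ and $E\cap(O+3k+1)=\emptyset$. Then $|S\cap[0,n)| = |E\cap[0,p)| + |O\cap[0,q)|$ where $p=\lceil n/2\rceil$ and $q=\lfloor n/2\rfloor$ split $n$ into its even and odd positions, and the gap condition gives $|E\cap[0,p)|\le\lceil p/3\rceil$ and $|O\cap[0,q)|\le\lceil q/3\rceil$. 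A short arithmetic check shows $\lceil p/3\rceil+\lceil q/3\rceil \le 2k+\lceil s/3\rceil = |S^*\cap[0,n)|$ except when $n\equiv 2$ or $3\pmod 6$, in which case the naive bound overshoots by exactly $1$. For those $n$ one shows the two maxima cannot both be attained: equality $|O\cap[0,q)|=\lceil q/3\rceil$ forces $O\cap[0,q)$ to be the exact step-$3$ progression $\{0,3,6,\dots\}$ (so $0,3k\in O$), and equality $|E\cap[0,p)|=\lceil p/3\rceil$ forces $\min(E\cap[0,p))\in\{0,1\}$, with the value $1$ further forcing $E\cap[0,p)=\{1,4,7,\dots\}$; if $0\in E$ then $3k=0+3k$ lies in $O\cap(E+3k)$, and if $\min E=1$ then $3k+1\in E$ lies in $E\cap(O+3k+1)$ — either way a cross-constraint is violated. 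Hence the sum drops by $1$ and $|S\cap[0,n)|\le|S^*\cap[0,n)|$ here as well.

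The main obstacle is that last clause of step (ii): correctly isolating the one bad residue window mod $6$, recognizing that achieving the double maximum rigidly pins $E$ and $O$ down to prescribed step-$3$ progressions on the relevant intervals, and then extracting the contradiction from whichever cross-constraint is in force. Everything else — the $D$-avoidance verification, the window-of-$6$ estimate, and the Abel summation — is bookkeeping.
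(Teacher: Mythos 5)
Your proof is correct, and it takes a genuinely different route from the paper's. The paper cuts strings into blocks of length $6k$, proves two local facts (Fact 1: $A=(110000)^k$ and $AB$ are germ-maximal for their lengths; Fact 2: a dichotomy for consecutive block pairs), and then assembles a partition of the positive integers into spans on which $S^*$ wins, invoking K\"onig's Lemma to pass from the finite partitions to an infinite one; the conclusion is an inequality $S_q\le S^*_q$ near $q=1$. You instead prove the strictly stronger statement that $S^*$ maximizes the prefix count $|S\cap[0,n)|$ for \emph{every} $n$ simultaneously, via the parity split $S=2E\sqcup(2O+1)$ and the two cross-constraints forced by the odd distance $6k+1$; the Abel identity then gives $S_q\le S^*_q$ on all of $(0,1)$, with no block coding or compactness argument. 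I checked the arithmetic: the overshoot of $\lceil p/3\rceil+\lceil q/3\rceil$ over $2k+\lceil s/3\rceil$ occurs exactly for $s\equiv 2,3\pmod 6$, and in both cases $q=3k+3j+1\equiv 1\pmod 3$, so attaining $|O\cap[0,q)|=\lceil q/3\rceil$ really does pin $O\cap[0,q)$ down to $\{0,3,\dots,3(k+j)\}$, and your rigidity-plus-cross-constraint contradiction goes through in both subcases ($0\in E$ versus $\min E=1$). What your approach buys is a stronger and cleaner conclusion (domination on all of $(0,1)$, not just a germ at $1^-$); what it costs is generality. Pointwise prefix domination is a special feature of this $D$ -- already for $D=\{3,5\}$ the winner $2\NN$ loses the prefix count at $n=3$ to the greedy set $\{0,1,2,8,\dots\}$ -- whereas the paper's span-decomposition method is built to extend to examples like $D=\{4,7,11\}$ and $D=\{5,8,13\}$ where no such pointwise statement is available.
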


In particular, the winner is not periodic.

\begin{proof}
We split all strings into \emph{bytes} of length 6.
Define two $D$-avoiding strings of $k$ bytes as follows:
\begin{align*}
    A &= (110000)^k \\
    B &= (100100)^k.
\end{align*}

We prove the following facts:
\begin{enumerate}
    \item[Fact 1.] $A$, $AB$ are the germ-maximal $D$-avoiding strings for their respective lengths.
    
    \item[Fact 2.] Let $Q, R$ be strings of length $6k$ such that $QR$ is $D$-avoiding.
    Then either $R\preceq B$ or $QR\preceq BB$.
\end{enumerate}

First, we show $A$ is the germ-maximal $D$-avoiding string of length $6k.$
Let $D'=\{2, 4 \}$.
Note any $D$-avoiding string is also $D'$-avoiding.
For strings of length $6k$, any $D'$-avoiding string is also $D$-avoiding.
The germ-maximal $D'$-avoiding string of length $6k$
is easily seen to be $A$.

To prove $AB$ is the germ-maximal $D$-avoiding string of its length, we can use Fact 2.  Assume Fact 2 is true.
We write $X\prec Y$ to mean $X\preceq Y$ and $X\ne Y$.
Suppose there is a $D$-avoiding string $P$ of length $2\times 6k$ such that $AB \prec P$. 
Let $P=QR$ where $Q,R$ each have length $6k$.
By Fact 1 we know $Q \preceq A$, so if $AB \prec QR$,
we must have $B \prec R.$
Then by Fact 2, we have $P=QR \preceq BB \preceq AB$,
which is a contradiction.

Next we prove Fact 2.
Suppose we fix a $D$-avoiding string $R$ of length $6k$ such that $B\prec R$.
We want to show that $QR \preceq BB$ as long as $QR$ is a $D$-avoiding string of length $2\times6k$.

We first show that the assumption $B\prec R$ restricts the structure of $R$ significantly.
Recall that any $D$-avoiding string is also $D'$-avoiding.
In particular, any byte (of length 6) of any $D$-avoiding string contains at most two 1's.
As each byte of $B$ already has exactly two $1$'s,
$B \prec R$ means the same is true for $R$.
Write $R$ in bytes of length $6$ as $R= R_1R_2\cdots R_k$.
Note that the germ-maximal $D$-avoiding byte is $110000$, and that the runner-up is $100100$.
To satisfy $B \prec R$, therefore,
at least one $R_i$ must equal $110000$.
Further, to avoid distances $2$ and $4$,
we must fill in four consecutive 0's right before $R_i$.
That is, if $R_i=110000$, then $R_{i-1}=110000$.
It follows that $R_1=110000.$

Now, still with the assumption $B\prec R$, consider a $D$-avoiding string $QR$ of length $2\times 6k$.
Suppose $BB \prec QR$.
Write $QR=Q_1\ldots Q_k R_1 \ldots R_k$ in bytes.
By the argument above, $R_1=110000$.
Again to have $BB \prec QR$ each $Q_i$ must have two 1's, so as above working from $Q_k$ back to $Q_1$ we see that each $Q_j=110000$, i.e., $Q=A$.
Yet this is a contradiction,
since the distance between the $1$ in the first position of $Q_1$ and the $1$ in the second position of $R_1$ is the forbidden $6k+1$.
So $QR \preceq BB$ and Fact 2 is proved.

Using Facts 1 and 2, we may now complete the proof.
Let $S$ be any $D$-avoiding infinite string. We will show $S\preceq S^*=ABBBB\cdots$.

Henceforth we write $S=S_1S_2S_3\cdots$ where each $S_i$ is a string of length $6k$, which we refer to as \emph{block} $i$.  (So a block is made of $k$ bytes.)
We use the word \emph{span} to mean a collection of indices of consecutive blocks.

By Fact 1 above, $S^*$ beats or ties $S$ on block $1$ and also on the span consisting of blocks $1$ and $2$.

For each $k$, we inductively construct a partition $P_k$ of the set $\{ 1, 2, \dots , k\}$ into spans such that $S^*$ beats or ties $S$ on each of these spans.
Each span in the partition $P_k$ will have size 1 or 2.
For $k=1, 2$ let $P_k$ be the singletons $\{\{1\}\}$ and $\{\{1, 2\}\}$, respectively.
For $k\geq 3$, assume we have partitions $P_n$ for all $n<k$.
Let $QR=S_{k-1}S_k.$
Recall $S^*$ has $BB$ in the corresponding span.
We define $P_k$ as follows:  
\begin{enumerate}
     \item if $R \preceq B$, let $P_k$ be $P_{k-1}\cup \{\{k\}\}$;
     \item otherwise, by Fact 2, $QR\preceq BB$; in this case let $P_k$ be $P_{k-2}\cup \{\{k-1, k\}\}$.
 \end{enumerate}, 

To show $S\preceq S^*$, it is sufficient to partition the positive integers into spans such that $S^*$ beats or ties $S$ on each span.
We use K\"onig's Lemma to produce such a partition from the sequence of partitions $P_k$. 
Specifically we construct an acyclic graph with vertex set equal to the positive integers by joining each $k\geq 3$ to exactly one of $k-1$ or $k-2$, depending on which partition was used to construct $P_k$.
By K\"onig's Lemma this graph contains an infinite path $\{ j_1, j_2, \cdots\}$, which must be increasing since each vertex only connects to one smaller vertex.
By construction, the partitions $P_{j_1}, P_{j_2}, \cdots$ are nested, so their union is the desired partition of the positive integers.
\end{proof}

A similar analysis can be applied to certain other examples, such as $D=\{4,7,11\}$ and $D=\{5,8,13\}$. Note that these both correspond to packing problems.  In each case the appropriate versions of Facts 1 and 2 are finite checks and can be verified e.g.~using Algorithm~\ref{dynamic} (Section \ref{sec:algorithms}). However $A$ and $B$ can become rather long; for $D=\{5,8,13\}$, for instance, we use blocks of length 18 to prove Fact 2, while Fact 1 requires going a little further out.
We do not know which or how many (non-symmetric) $D$'s, even of size 3, will yield to this approach.

\section{Search for winners:  Some algorithms}\label{sec:algorithms}

\subsection{A finite set of candidates for an optimal sequence} \label{subsec:finite}

We next present a fast algorithm for finding the germ-optimal $D$-avoiding 
bit string of any fixed (finite) length.

We note two properties of our setup:
\begin{itemize}
\item The condition for being a $D$-avoiding string is a local condition, i.e. to check whether the string $S$ is $D$-avoiding can be done locally by checking the condition on contiguous substrings of $S$ of length no more than $\|D\|+1$.  
\item The germ order has the property that given two strings of the same length $A$ and $B$, if $A$ is bigger than $B$ in germ order then $AX$ is bigger than $BX$ for all strings $X$. 
\end{itemize}

These two properties are enough to allow for a standard dynamic programming algorithm on a line to compute, in linear time in the length $l$, a finite list of sequences of length $l$ each of which is optimal conditioned on the values of its final $\|D\|$ bits. The list has one sequence ending with each legal string of length $\|D\|$, so the size of the list is constant in $l$.  The optimal sequence of length $l$ can then be found by comparing the sequences on the list.

%%% At some point we should consider this comment.
\begin{comment}
Note:  the paragraph below produces strings whose lengths are multiples of some fixed $m>\|D\|$.  We should perhaps give the (straightforward) modification that produces strings of all lengths.
\end{comment}

\begin{alg}\label{dynamic}
Given a finite set $D\subset \Z_{\geq 0}$, fix any integer $m>\|D\|$. Let $\{\sigma_1, \dots, \sigma_r\}$ denote the set of all $D$-avoiding strings of length $m$.   Then there is an efficient algorithm to compute, for any $k\geq 1$, the set $S_k=\{s_{k,1}, \dots, s_{k,r} \}$, where $s_{k,i}$ is the optimal $D$-avoiding string of length $km$ that ends in the substring $\sigma_i$.  The optimal $D$-avoiding string of length $km$ is then the best element of the finite set $S_k$.
\end{alg}

We briefly spell out the dynamic program.  Initially define $s_{1,i}=\sigma_i$, so that $S_1=\{\sigma_1, \dots, \sigma_r \}$.  Assuming we have  $S_k=\{s_{k,1}, \dots, s_{k,r} \}$ where $s_{k,i}$ is the optimal $D$-avoiding string of length $km$ that ends in the substring $\sigma_i$, we efficiently generate $S_{k+1}$ as follows.  For each $i$, define $s_{k+1,i}$ to be the $D$-avoiding string of biggest germ order from the set $S_{k}\sigma_i$ (that has $r$ elements) consisting of the elements of $S_k$ concatenated with $\sigma _i$.  Then define $S_{k+1}=\{s_{k+1,1}, \dots, s_{k+1,r} \}$.  The optimality of $s_{k+1,i}=\sigma_{j_1}\sigma_{j_2} \dots \sigma _{j_k}\sigma_i$ follows from the guarantee that if $s_{k+1,i}$ is the optimal string of length $(k+1)m$ ending in $\sigma_i$ then the substring $\sigma_{j_1}\sigma_{j_2} \dots \sigma _{j_k}$ must be optimal among strings that end in $\sigma_{j_k}$ and hence was one of the elements considered in $S_k$.

In practice, these lists tend to stabilize fairly quickly.

\subsection{Convergence and maximality of local optimization} \label{subsec:local}

Motivated by the algorithm above, we explore the result of a sequence of ``local improvements,'' each of which replaces a local patch of a string with the optimal substring that is consistent with the adjacent regions. 
Specifically, consider a finite difference set $D$ and two ``boundary'' strings $A, B \in \{0,1\}^{\|D\|}$. In light of the comments above, it follows that for any $\ell \geq \|D\|$ there is a unique maximum string $G \in \{0,1\}^\ell$ for which
\[
 A Z B \preceq A G B 
\]
for all strings $Z$ of length $\ell$. We introduce the notation  $\Gamma_\ell(A,B)$ for this maximum string. It follows that if $w \in \{0,1\}^*$ (or $\{0,1\}^{\NN}$) is a germ-maximal $D$-avoiding string then any appearance of the strings $A$ and $B$ in $w$ separated by exactly $\ell$ symbols must enclose the string $\Gamma_\ell(A,B)$. (Note that it makes sense to define this notion for $\ell < \|D\|$, though in this case one must focus on \emph{consistent} pairs $(\alpha,\beta)$ for which there exists at least one such $x$.) 

In general, for two strings $w, w' \in \{0,1\}^{\NN}$, we write
\[
w \vdash^{A,B}_{\ell} w'
\]
if $w$ can be written $X A Y B Z$ for a string $Y \in \{0,1\}^\ell$ so that $w' = XAGBZ$, where $G = \Gamma_\ell(A,B)$. We likewise define
\[
w \vdash_\ell w'
\]
if $w \vdash_\ell^{A,B} w'$ for some pair $A, B \in \{0,1\}^{\|D\|}$. Observe that
\[
w \vdash_\ell w' \quad \Rightarrow \quad w \preceq w'\,.
\]

\begin{thm} \label{thm:converge} 
Let $D$ be a finite subset of $\NN$ and $\ell \geq \|D\|$. Let $w = w^{(0)} \in \{0,1\}^{\NN}$ be a $D$-avoiding string and let $w^{(1)}, w^{(2)}, \ldots$ be a sequence of elements of $\{0,1\}^{\NN}$ for which
\[
w^{(0)} \vdash_\ell w^{(1)} \vdash_\ell w^{(2)} \vdash_\ell \ldots\,.
\]
Then this sequence converges in the sense that there is a string $w^* \in \{0,1\}^{\NN}$ so that for any position $t$, $w^*_t = w^{(k)}_t$ for all sufficiently large $k$.
\end{thm}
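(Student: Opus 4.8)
The plan is to track, for each fixed position $t$, the sequence of bit values $w^{(0)}_t, w^{(1)}_t, \dots$ and argue that it is eventually constant. The central observation is that each step $w^{(k)} \vdash_\ell w^{(k+1)}$ only rewrites a window of length $\|D\| + \ell + \|D\|$ (the block $AYB$ becomes $AGB$), and leaves every symbol outside that window untouched; moreover by the displayed implication $w^{(k)} \preceq w^{(k+1)}$, so the germs are (weakly) increasing along the sequence. First I would observe that it suffices to prove convergence at position $0$: once we know that the prefix $w^{(k)}_0 \cdots w^{(k)}_{N-1}$ stabilizes for every $N$ (by applying the position-$0$ statement to each suffix or, more simply, running the same argument at each $t$), the limiting string $w^*$ is well-defined. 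So the real content is a ``no infinite oscillation'' statement at a single coordinate, and really the crux is that the total amount of change that can happen is limited.

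The key step is a potential/monotonicity argument. Fix a position $t$ and a length scale; consider the truncated generating function contribution of the first $T$ symbols, or better, work directly with germs: along the sequence we have $w^{(0)} \preceq w^{(1)} \preceq w^{(2)} \preceq \cdots$, and each $w^{(k)}$ is a $D$-avoiding string (each rewrite preserves $D$-avoidance since $\Gamma_\ell(A,B)$ is chosen among strings making $AGB$ legal in context — here I would lean on the local-checkability of $D$-avoidance noted in Section~\ref{sec:algorithms}). Now suppose, for contradiction, that some position $t$ changes infinitely often. Each change at position $t$ is caused by a rewrite whose window contains $t$; since the window has fixed width $w_0 := \ell + 2\|D\|$, the left endpoint of such a window lies in a fixed finite interval $[t - w_0 + 1,\, t]$. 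Consider the leftmost position $p$ that changes infinitely often (it exists: position $0$ changes only finitely often would be the base case, but in general take the infimum of the set of infinitely-changing positions, which is attained since this set, if nonempty, has a least element). All positions $< p$ are eventually frozen; discard an initial segment of the sequence so that they never change again. After that point, every rewrite touching position $p$ has its window $AYB$ with the $A$-part to the left of $p$ (or overlapping $p$); since the $\|D\|$ bits just before the window are frozen, and $A$ itself must match the frozen content to the left, there are only finitely many distinct ``left boundary data'' $A$ that can ever recur.

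The hard part will be converting ``infinitely many rewrites in a bounded region with finitely many boundary conditions'' into an actual contradiction with the strictly-increasing germ order. Here is the mechanism I would use: because everything to the left of the window is frozen and bounded, the relevant comparison at each such rewrite reduces to comparing finite strings of bounded length sitting in a fixed context — and by the second bullet of Section~\ref{subsec:finite} (if $A' \succeq B'$ as equal-length strings then $A'X \succeq B'X$), together with the fact that $\Gamma_\ell$ selects the unique germ-maximum, each nontrivial rewrite strictly increases the germ of the corresponding bounded-length ``local word'' (in the fixed context of the frozen left part and whatever lies to the right). But the germ order restricted to strings of bounded length, over a bounded alphabet, takes only finitely many values; a strictly increasing sequence in a finite poset cannot be infinite. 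This yields the contradiction, so every position changes only finitely often, and $w^*$ is well-defined. The one technical subtlety to get right is that a rewrite's window may extend to the \emph{right} of the frozen region, so the ``local word'' whose germ we track must be taken long enough to contain all windows that ever act near $p$ — but since those windows have left endpoint in the fixed interval $[p - w_0 + 1, p]$ and fixed width $w_0$, they all lie within $[p - w_0 + 1, p + w_0 - 1]$, a fixed finite interval, so a single bounded-length local word suffices and the finiteness argument goes through.
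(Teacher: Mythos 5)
Your reduction to ``the leftmost position $p$ that changes infinitely often'' is sound, as is the observation that every window that alters bit $p$ has its left endpoint in a fixed finite interval. The gap is in the step that is supposed to deliver the contradiction: you track the germ of the local word on a fixed bounded interval $I$ around $p$ and claim each nontrivial rewrite strictly increases it, so that finiteness of the set of words on $I$ forbids an infinite strictly increasing chain. That monotonicity claim is false. A rewrite $w \vdash_\ell w'$ only guarantees that the content of its own window is replaced by something germ-larger; if that window straddles the right edge of $I$, the restriction of the change to $I$ can be a germ \emph{decrease} (for instance, the rewrite can move a $1$ rightward inside $I$ while compensating with new $1$'s just beyond the edge of $I$). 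Without monotonicity, ``finitely many possible local words'' proves nothing: the local word can simply cycle forever. Nor does enlarging $I$ help: a rewrite with window starting at $s$ reads its right context $B$ from positions up to $s+\ell+\|D\|-1$, so changes made arbitrarily far to the right can propagate leftward (by up to $\ell+\|D\|$ per step) across infinitely many rewrites and eventually influence what happens at $p$. There is no finite interval outside of which the dynamics are irrelevant to bit $p$, so no purely local, finite-poset potential of the kind you propose exists. (Note also that the infinite germ order itself admits infinite strictly increasing chains --- see the Ross--Littlewood discussion in Section~\ref{subsec:topology} --- so monotonicity of the global germ alone is likewise insufficient.)

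The paper's proof supplies exactly the missing ingredient: a globally bounded, quantitatively monotone potential. Each step changes the generating function by $X^t p(X)$ with $p$ drawn from the finite set $P_\ell$ of degree-$(\ell-1)$ polynomials with coefficients in $\{-1,0,1\}$; choosing $x_0 < 1$ beyond the largest sub-unit root of any nonzero member of $P_\ell$ turns germ-positivity of every increment into genuine positivity at $x_0$. Hence $g^{(i)}(x_0)$ is nondecreasing and bounded by $1/(1-x_0)$, so it converges, while every nontrivial rewrite whose first altered bit is at position $t$ increases $g(x_0)$ by at least $x_0^t\epsilon_0$ for a uniform $\epsilon_0>0$. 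Convergence of the potential therefore leaves room for only finitely many rewrites reaching any fixed position, which is the stabilization you were after. To repair your argument you would need to import some such quantitative global potential; the local combinatorial bookkeeping cannot stand on its own.
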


\begin{proof}
Define $g^{(i)}$ to be the power series associated with $w^{(i)}$. Then for each $i$ we may write
\[
g^{(i+1)} = g^{(i)} + X^t p(X)
\]
where $p(X)$ is a polynomial of degree no more than $\ell - 1$ with coefficients in $\{-1, 0, 1\}$. When $w^{(i)} \neq w^{(i+1)}$, the value of $t$ is determined by the length of the common prefix of the two strings.  As $w^{(i)} \preceq w^{(i+1)}$, $X^t p(X) \geq 0$ and hence $p(X) \geq 0$ in the germ order.

Let $P_\ell = \{ a_{\ell-1} X^{\ell-1} + \cdots + a_0 \mid a_i \in \{-1,0,1\} \}$ denote the set of all polynomials of degree at most $\ell-1$ with coefficients in $\{-1,0,1\}$, let
\[
R_\ell = \Bigl\{ x \in \RR \;\Bigm|\; \text{$p(x) = 0$ for some $p(X) \in P_\ell \setminus \{0\}$} \Bigr\}\,,
\]
and define
\[
\epsilon_\ell = \max(\{x \in R_\ell \mid x < 1\})\,.
\]
Observe that if $p(X) \in P_\ell$ exceeds $0$ in the germ order, then $p(x) > 0$ for all $x \in (\epsilon_\ell,1)$. The same can be said for any polynomial of the form $X^t p(X)$, and we conclude that for any point $x_0 \in (\epsilon_\ell,1)$, the values $g^{(i)}(x_0)$ are monotonically increasing. As $g^{(i)}(x_0) \leq 1 + x_0 + x_0^2 + \cdots = 1/(1 - x_0)$, the monotone sequence $g^{(i)}(x_0)$ is bounded and hence converges to a particular value $g^*(x_0)$.

Finally, for a fixed point $x_0 \in (\epsilon_\ell,1)$, define
\[
\epsilon_0 = \min\left(\Bigl\{ |q(x_0)| \,\Bigm|\, q(X) \in P_{\ell} \setminus \{0\}\Bigr\}\right)\,.
\]
Considering two strings $w$ and $\tilde{w}$ for which $w \vdash_\ell \tilde{w}$ corresponding to a substring replacement starting at position $t$, the power series $g$ and $\tilde{g}$ associated with these strings satisfy $\tilde{g}(x_0) = g(x_0) + x_0^t q(x_0)$ for a nonzero polynomial $q(X) \in P_\ell$; hence $\tilde{g}(x_0) \geq g(x) + x_0^t\epsilon_0$.
Then observe that if
\[
\Bigl|g^{(i)}(x_0) - g^*(x_0)\Bigr| <  x_0^t \epsilon_0
\]
for all $i \geq k$ then no such replacement is possible at step $i$ and, indeed, the $t$th bit of all strings $w^{(i)}$ must agree for $i \geq k$. It follows that the sequence $w^{(i)}$ converges pointwise to a particular string $w^*$.
\end{proof}

    Let $D$ be a finite subset of $\NN$ and $\ell \geq \|D\|$. For a $D$-avoiding string $w \in \{0,1\}^*$ and a position $t > \ell$, let $r_t(w)$ be the string obtained by replacing bits $t,t+1,\ldots,t+\ell-1$ with the best possible legal alternative, i.e. $r_t(w)$ is defined by
    $w \vdash^{A,B}_{\ell} r_t(w)$ with $A = w_{t-\|D\|} \ldots w_{t-1}$ and $B = w_{t+\ell} \ldots w_{t+\ell+\|D\|-1}$. 
    
\begin{cor} \label{cor:converge}
    Let $D$ be a finite subset of $\NN$ and $\ell \geq \|D\|$. Let $w \in \{0,1\}^{\NN}$ be a $D$-avoiding string. Let $t_1, t_2, \ldots$ be a sequence of integers so that $t_i > \ell$ for each $i$ and each integer in the set $\{ \ell+1, \ldots\}$ appears infinitely often in the sequence. Then the sequence
        \begin{align*}
        w^{(0)} &= w\,,\\
        w^{(i)} &= r_{t_i}(w^{(i-1)}) \ \mbox{for $i \geq 1$}
        \end{align*}
            converges to an $\ell$-maximal element $w^*$, which is to say that $r_t(w^*) = w^*$ for all $t > \ell$.
\end{cor}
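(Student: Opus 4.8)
The plan is to reduce the statement to Theorem~\ref{thm:converge} and then add a short locality argument. First I would check the hypotheses of Theorem~\ref{thm:converge}. By the definition of $r_t$, for each $i \geq 1$ we have $w^{(i-1)} \vdash^{A_i,B_i}_{\ell} w^{(i)}$ with $A_i = w^{(i-1)}_{t_i-\|D\|}\cdots w^{(i-1)}_{t_i-1}$ and $B_i = w^{(i-1)}_{t_i+\ell}\cdots w^{(i-1)}_{t_i+\ell+\|D\|-1}$; in particular $w^{(i-1)} \vdash_\ell w^{(i)}$. Moreover each $w^{(i)}$ is again $D$-avoiding, since the inserted block $\Gamma_\ell(A_i,B_i)$ is by construction a legal completion of the consistent pair $(A_i,B_i)$, and (as $\ell$ and the lengths $|A_i|=|B_i|$ are all at least $\|D\|$) no forbidden distance can connect the modified window to the unchanged part of $w^{(i-1)}$. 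Thus the chain $w^{(0)} \vdash_\ell w^{(1)} \vdash_\ell w^{(2)} \vdash_\ell \cdots$ satisfies the hypotheses of Theorem~\ref{thm:converge}, which produces a string $w^* \in \{0,1\}^{\NN}$ such that for every position $t$ there is a $k$ with $w^{(i)}_t = w^*_t$ for all $i \geq k$; this is exactly the asserted convergence. Since $D$-avoidance is a pointwise-closed condition, $w^*$ is itself $D$-avoiding, so $r_t(w^*)$ is defined for all $t > \ell$.

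It remains to prove that $r_t(w^*) = w^*$ for every $t > \ell$. Fix such a $t$. The operator $r_t$ is \emph{locally determined}: $r_t(v)$ agrees with $v$ outside positions $t,\dots,t+\ell-1$, and the replacement block $\Gamma_\ell\bigl(v_{t-\|D\|}\cdots v_{t-1},\, v_{t+\ell}\cdots v_{t+\ell+\|D\|-1}\bigr)$ depends on $v$ only through the bits in the finite window $W = \{t-\|D\|,\dots,t+\ell+\|D\|-1\}$. By pointwise convergence I may choose $k$ so large that $w^{(i)}$ agrees with $w^*$ on all of $W$ for every $i \geq k$. Since $t$ occurs infinitely often in the sequence $t_1,t_2,\dots$, I can pick an index $i$ with $i-1 \geq k$ and $t_i = t$. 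Then $w^{(i-1)}$ agrees with $w^*$ on $W$, so the boundary strings used to compute $w^{(i)} = r_t(w^{(i-1)})$ are exactly $A := w^*_{t-\|D\|}\cdots w^*_{t-1}$ and $B := w^*_{t+\ell}\cdots w^*_{t+\ell+\|D\|-1}$; hence $w^{(i)}$ carries $\Gamma_\ell(A,B)$ in positions $t,\dots,t+\ell-1$. But $i \geq k$, so $w^{(i)}$ also agrees with $w^*$ on those positions, forcing $w^*_t\cdots w^*_{t+\ell-1} = \Gamma_\ell(A,B)$. Therefore $r_t(w^*)=w^*$, and since $t>\ell$ was arbitrary, $w^*$ is $\ell$-maximal.

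I do not expect a serious obstacle here: the corollary is essentially the quantitative convergence already proved in Theorem~\ref{thm:converge}, combined with the finite range of dependence of the operators $r_t$. The one point requiring care is the order of quantifiers in the second step — one must first fix the position $t$, then use convergence on the \emph{finite} window $W$ to obtain $k$, and only afterwards invoke the recurrence of $t$ in the sequence $(t_i)$ to locate an update step, occurring after time $k$, that pins down the bits of $w^*$ inside $W$. Trying to make a single $k$ work simultaneously for all $t$ would fail, since the convergence is only pointwise.
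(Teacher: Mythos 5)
Your proof is correct. The paper states Corollary~\ref{cor:converge} without proof, but your argument is evidently the intended one: invoke Theorem~\ref{thm:converge} to get pointwise convergence to $w^*$, then use the finite range of dependence of $r_t$ (the window $\{t-\|D\|,\dots,t+\ell+\|D\|-1\}$) together with the infinite recurrence of each $t$ in the sequence $(t_i)$ to pin down $w^*$ on that window and conclude $r_t(w^*)=w^*$. Your remark about the order of quantifiers --- fixing $t$ first, then choosing $k$ for the finite window, then selecting a late update step with $t_i=t$ --- is exactly the point that needs care, and you handle it correctly.
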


\section{Further thoughts}

\subsection{A topological aside} \label{subsec:topology}

Our germ ordering is not well-behaved relative to
the weak topology on the power set of $\NN$,
wherein a sequence of sets $S_n$ converges if and only if
$S_n \cap F$ is eventually constsant for every finite $F \subset \NN$.
As an illustration of this (related to the famous Ross-Littlewood Paradox),
consider the sequence of sets $S_n = \{n,n+1,\dots,10n\}$;
we have $S_1 \prec S_2 \prec S_3 \prec \dots$,
but it is unclear what the limit of the $S_n$'s should be.
Surely it is not the pointwise limit of the sets, since that is the null set!
One way to understand what is going on here
is to note that, even though for each $n$ there exists $\epsilon_n > 0$
such that $(S_n)_q < (S_{n+1})_q$ for all $q$ in $(1-\epsilon_n,1)$,
we have $\inf \epsilon_n = 0$,
so that the intersection of the intervals $(1-\epsilon_n,1)$ is empty.

This sort of situation comes into play when one tries 
to prove Conjecture~\ref{conj:avoid-uniq} by showing that
$c \succeq c_1,c_2,c_3,\dots$ implies
$c\!:\!c\!:\!c\!:\!\dots \succeq c_1\!:\!c_2\!:\!c_3:\dots$.
If we take $\epsilon_n$ satisfying $|c| \geq |c_n|$ for all $q$ in $(1-\epsilon_n,1)$,
and the infimum of the $\epsilon_n$ is not known to be positive,
then the obvious approach to proving the implication fails.

\subsection{Truncated germs} \label{sec:truncate}

In our work a rational set $S \subseteq \NN$ is replaced by the power series
$\sum_{n \in S} q^n$, which is rewritten as the Laurent series $\sum_{n \geq -1} a_n(1-q)^n$,
and the coefficients $a_{-1},a_0,a_1,a_2,\dots$ are used to put
a total ordering on the rational sets.
The coefficients $a_n$ carry finer and finer information as $n$ increases,
so it is natural to discard this information after some point.
The classical theory of packings retains only $a_{-1}$ (the density of $S$);
we suggest that it is natural to retain both $a_{-1}$ and $a_0$.
That is, we define a non-Archimedean valuation $\nu$
from the set of rational subsets of $\NN$ to $\QQ \times \QQ$,
where we view $\QQ \times \QQ$ as 
the lexicographic product of the ordered ring $\QQ$ with itself.
It can be shown that the pairs $(a_{-1},a_0)$ that occur
are those of the form $(0,k)$ or $(1,-k)$ where $k$ is a nonnegative integer,
along with pairs of the form $(p,q)$
where $p$ is a rational number strictly between 0 and 1
and where $q$ is an arbitrary rational number.
This valuation is not translation-invariant;
if $\nu(S) = (p,q)$, then $\nu(S+1) = (p,q-p)$.
Note that under this valuation, the sets
$\{3,6,9,12,15,18\}$ and $\{1,3,6,9,15,18\}$
discussed at the end of section~\ref{sec:main} have the same size,
since the germs of $\{1\}$ and $\{12\}$ differ by $o(1)$.
The valuation is emphatically not countably additive,
as can for instance be seen by viewing
$\NN$ as a union of singleton sets.

One can try to extend this valuation to various classes of sets
that include but are not limited to the rational subsets of $\NN$.
One way to do this without directly invoking
the expansion of $\sum_{n \in S} q^n$ as a Laurent series in $1-q$
is to define a partial preorder on the power set of $\NN$
(the {\em lim inf preorder})
such that $S$ dominates $S'$ in the lim inf preorder
iff $\liminf_{q \rightarrow 1^-} (\sum_{n \in S} q^n - \sum_{n \in S'} q^n) \geq 0$.
This partial preordering, restricted to the rational sets,
coincides with the total preordering obtained
by factoring the germ-ordering through the valuation $\nu$.

\subsection{Efficiency gaps} \label{sec:gaps}

In the case of packing $\NN$ with translates of
$B = \{0,1,2,\dots,k-1\}$,
there is an appreciable {\em efficiency gap}
between the best packing and all other packings
(where an element $x$ of a non-Archimedean ordered ring extending $\RR$
is said to be {\em appreciable} when there exist positive $r,s$ in $\RR$ with $r<x<s$):

\begin{thm} \label{thm:gap}
For $k \geq 1$ and $D = \{1,2,\dots,k-1\}$,
if $S^*$ is the $D$-avoiding set $\{0,k,2k,3k,\dots\}$
and $S$ is any other $D$-avoiding set,
$S_q \preceq (S^*)_q - \frac{1}{k} + O(1-q)$.
\end{thm}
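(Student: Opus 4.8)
The plan is to quantify the domination statement $S \preceq S^*$ that follows (in the case $D = \{1,2,\dots,k-1\}$) from Corollary~\ref{cor:periodic} and the elementary observation that writing $S = \{s_1 < s_2 < s_3 < \cdots\}$ forces $s_j \geq (j-1)k$ for every $j$. The key new point is that this inequality is \emph{strict} for at least one $j$ unless $S = S^*$, and moreover, once $s_j > (j-1)k$ occurs, the gap only grows: if $s_j \geq (j-1)k + 1$ then, since $D$-avoidance forces $s_{j+1} \geq s_j + k$, we get $s_i \geq (i-1)k + 1$ for all $i \geq j$. So for any $D$-avoiding $S \neq S^*$ there is a threshold $J$ with $s_i \geq (i-1)k$ for $i < J$ and $s_i \geq (i-1)k + 1$ for $i \geq J$.

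First I would handle the case where $S$ is infinite. Comparing termwise, $(S^*)_q - S_q = \sum_{j \geq 1}\bigl(q^{(j-1)k} - q^{s_j}\bigr)$, and each summand is $\geq 0$ on $(0,1)$ since $s_j \geq (j-1)k$. Splitting off the tail $j \geq J$ and using $s_j \geq (j-1)k+1$ there, that tail contributes at least $\sum_{j \geq J}\bigl(q^{(j-1)k} - q^{(j-1)k+1}\bigr) = (1-q)\sum_{j\geq J} q^{(j-1)k} = (1-q)\,\frac{q^{(J-1)k}}{1-q^k}$. As $q \to 1^-$ this tends to $\frac{1}{k}$ (and more precisely equals $\frac1k + O(1-q)$, with the implied constant depending on $J$, hence on $S$). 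The finitely many terms $j < J$ contribute something $\geq 0$. Hence $(S^*)_q - S_q \geq \frac{1}{k} + O(1-q)$, which is exactly the claimed inequality $S_q \preceq (S^*)_q - \frac1k + O(1-q)$.

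Next I would dispose of the case where $S$ is finite, say $|S| = \ell$. Then $S_q \to \ell$ while $(S^*)_q$ diverges like $\frac1k \cdot \frac{1}{1-q}$, so $(S^*)_q - S_q$ is eventually larger than any constant; in particular it exceeds $\frac1k$ near $1$ (and one can still phrase the conclusion as $S_q \preceq (S^*)_q - \frac1k + O(1-q)$, reading the right side as a germ that dominates the constant germ $S_q$). Finally I should note the degenerate possibility $S = S^*$ is excluded by hypothesis, so the threshold $J$ always exists.

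The main obstacle is a bookkeeping subtlety rather than a deep one: the constant hidden in the $O(1-q)$, and even the size of the interval $(1-\epsilon,1)$ on which the inequality holds, depends on $J$ and hence on $S$ — it is \emph{not} uniform over all competitors $S$. This is the same non-uniformity phenomenon flagged in Section~\ref{subsec:topology}, and it is harmless here because the statement of Theorem~\ref{thm:gap} only asserts, for each fixed $S$, the germ inequality $S_q \preceq (S^*)_q - \frac1k + O(1-q)$; the $O(1-q)$ is allowed to absorb an $S$-dependent constant. I would make sure the write-up states the quantifiers in this order so the argument reads cleanly.
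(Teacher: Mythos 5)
Your proposal is correct and follows essentially the same route as the paper: both identify the first index at which $S$ deviates from $S^*$, observe that every subsequent element is forced upward by at least $1$, and convert that uniform shift along an arithmetic progression of common difference $k$ into a germ deficit of $\frac{1}{k}+O(1-q)$. The paper phrases this by comparing $S$ with an explicit dominating set and writes out only the case $k=2$, whereas you sum the termwise differences directly for general $k$ and treat finite $S$ separately, but the substance is the same.
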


\noindent
{\bf Proof:} We focus on the case $k=2$ for clarity.
Let $S^* = \{0,2,4,\dots\}$
and let $S$ be some $\{1\}$-avoiding set other than $S^*$.
We can split $S$ into two pieces,
one of which looks like an initial segment of $S^*$
and the other of which doesn't.
In more detail, we write $S$ as the disjoint union of two sets,
one of the form $\{0,2,\dots,2(m-1)\}$ (empty if $m=0$)
and one of the form $\{t_1,t_2,t_3,\dots\}$
(with $t_1 < t_2 < t_3 < \dots$) satisfying
$t_1 \geq 2m+1$, $t_2 \geq 2m+3$, $t_3 \geq 2m+5$, etc.
The germ of $S$ is dominated by the germ of
$\{0,2,\dots,2(m-1)\} \cup \{2m+1,2m+3,2m+5,\dots\}$;
but this germ is the same (up to $O(1-q)$)
as the germ of $\{1,3,5,\dots\}$,
which falls short of the germ of $\{0,2,4,\dots\}$
by $\frac12 + O(1-q)$.
The case $k>2$ is similar. 
$\square$

\medskip

On the other hand, the non-periodic winners of Section \ref{sec:examples}
beat the corresponding periodic contenders by a non-appreciable amount.  This includes the cases $D=\{4,7,11\}$ and $D=\{5,8,13\}$ arising from packing problems. 

\subsection{Choice of regularizer} \label{sec:regularizer}

The germ of $\sum_{n \in S} q^n$ as $q \rightarrow 1^-$ can also be thought of
as the germ of $\sum_{n \in S} e^{-n/s}$ as $s \rightarrow +\infty$;
we think of this as being associated with the function $e^{-t}$
along with a rescaling factor $s$ that measures ``spread''.
The function $e^{-t}$ is a natural 
regularizer to use for packing problems in $\NN$ or $[0,\infty)$;
likewise the function $e^{-t^2}$ would be 
a natural regularizer to use for packing problems in $\ZZ$ or $(-\infty,\infty)$.
Some aspects of the theory are sensitive to the choice of regularizer
but others are not;
e.g., numerical evidence suggests that basic fact (\ref{arith})
from section 2 remains true for the regularizer $q^{n^2}$
(corresponding to the Gaussian kernel $e^{-t^2}$).

\subsection{Connection to sphere-packing} \label{sec:spheres}

Packing problems and distance-avoiding set problems in $\NN$
were chosen as a testbed for ideas about
analogous problems in $\RR^n$,
and more specifically, sphere-packing problems.
Note that the problem of packing spheres of radius 1 in $\RR^n$
is equivalent to the problem of packing points in $\RR^n$
so that no two are at distance less than 2
(the points are the centers of the spheres).
We will not pursue the topic of sphere-packing in depth,
but we will mention the conjectures that motivated this work.

\begin{conjecture}\label{conj:disk-uniq}
Let $S$ be a subset of $\RR^2$,
no two of whose points are at distance less than 2,
and let $S^*$ be the set of center-points in
a hexagonal close-packing of disks of radius 1 in $\RR^2$. 
Let
$$\delta(S) = \liminf_{s \rightarrow \infty} \ \left( 
  \sum_{(x,y) \in S^*} e^{-(x^2+y^2)/s^2} 
- \sum_{(x,y) \in S} e^{-(x^2+y^2)/s^2} \right).$$
Then either $S$ is related to $S^*$ by an isometry of $\RR^2$, 
in which case $\delta(S) = 0$,
or else $S$ is not related to $S^*$ by an isometry of $\RR^2$, 
in which case $\delta(S) > 0$.
\end{conjecture}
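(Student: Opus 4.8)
\medskip
\noindent\textbf{A proposal toward Conjecture~\ref{conj:disk-uniq}.}
We do not know how to prove this; what follows is a program. It has three stages: reduce to the case of equal asymptotic density, locate a local geometric defect, and convert that defect into a quantitative lower bound on $\delta(S)$.

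\emph{Stage 1 (reduction to equal density).} Poisson summation gives, for any lattice $L\subset\RR^2$ of covolume $V$,
\[
\sum_{v\in L}e^{-|v|^2/s^2}=\frac{\pi s^2}{V}+\frac{\pi s^2}{V}\sum_{0\ne w\in L^*}e^{-\pi^2s^2|w|^2},
\]
and since $s^2e^{-cs^2}\to0$ the error tends to $0$ as $s\to\infty$; with $V=2\sqrt3$ this yields $\sum_{x\in S^*}e^{-|x|^2/s^2}=\frac{\pi s^2}{2\sqrt3}+o(1)$, and the analogous statement (with a shifted but still exponentially small error) for any isometric copy of $S^*$, so $\delta=0$ there. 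For a general $S$ with all pairwise distances at least $2$, the theorem of Thue (made rigorous by Fejes T\'oth and others) bounds the upper density of $S$ by $1/(2\sqrt3)$ points per unit area; if that bound is strict then $\sum_{x\in S}e^{-|x|^2/s^2}\le(\tfrac1{2\sqrt3}-\eta)\pi s^2+o(1)$ for some fixed $\eta>0$, and hence $\delta(S)=+\infty$. It remains to show: if $S$ has maximal upper density and is not isometric to $S^*$, then $\delta(S)>0$.

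\emph{Stage 2 (localizing a defect).} Fejes T\'oth's proof works cell by cell: the Voronoi cell $V_x$ of a distance-$2$ packing has area at least $2\sqrt3$, with equality exactly when $V_x$ is a regular hexagon circumscribing the unit disk about $x$; and if every Voronoi cell is such a hexagon, the hexagons tile the plane and $S$ is an isometric copy of $S^*$. Arguing by contradiction, suppose $\delta(S)=0$ but $S$ is not isometric to $S^*$. Since every cell has area at least $2\sqrt3$, some cell $V_{x_0}$ then has area $2\sqrt3+\eta$ for some fixed $\eta>0$ (the case of an unbounded cell, $\eta=+\infty$, only makes the conclusion easier).

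\emph{Stage 3 (defect $\Rightarrow$ positivity; the crux).} Using the Voronoi tiling, write $\sum_{x\in S}e^{-|x|^2/s^2}=\sum_x\frac1{|V_x|}\int_{V_x}e^{-|x|^2/s^2}\,dy$ and compare with $\frac1{2\sqrt3}\int_{\RR^2}e^{-|y|^2/s^2}\,dy=\frac{\pi s^2}{2\sqrt3}$. Since $|V_x|^{-1}\le(2\sqrt3)^{-1}$ and one loses a definite amount on $V_{x_0}$, and since $e^{-|y|^2/s^2}\to1$ uniformly on bounded sets as $s\to\infty$, the cell $V_{x_0}$ ought to contribute at least $\tfrac{\eta}{2\sqrt3}+o(1)$ to the deficit $\frac{\pi s^2}{2\sqrt3}-\sum_{x\in S}e^{-|x|^2/s^2}$, which would give $\delta(S)\ge\tfrac{\eta}{2\sqrt3}>0$. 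The obstacle is that replacing $e^{-|x|^2/s^2}$ by the $V_x$-average of $e^{-|y|^2/s^2}$ introduces ``surface'' errors whose naive bound is only $O(s)$, dwarfing the $O(1)$ quantity $\delta(S)$; one must show these surface terms actually cancel down to $o(1)$ (as Poisson summation shows they do for lattices), presumably via a quantitative stability strengthening of Fejes T\'oth's inequality. This is precisely the tension flagged in Section~\ref{subsec:topology}: an $O(1)$ germ-level quantity competing against $O(s)$-scale fluctuations whose supporting intervals $(1-\epsilon,1)$ threaten to shrink to the empty set, and we regard it as the principal difficulty. We note finally that the one-dimensional block coding behind Theorem~\ref{thm:avoid-rat}, together with Lemmas~\ref{lem:swap} and~\ref{lem:lyndon}, has no evident planar analogue, since a $2$-dimensional packing cannot be cut along a ``repeated state'' and recombined; and that in dimensions $8$ and $24$ the sharp linear-programming functions of Viazovska and of Cohn--Kumar--Miller--Radchenko--Viazovska may furnish an alternative route to the analogous statement, whereas in dimension $2$ the linear-programming bound is not tight and the Fejes T\'oth approach seems unavoidable.
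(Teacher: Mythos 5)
First, note that the statement you are proving is labeled a \emph{Conjecture} in the paper: there is no proof there to compare against, only a remark attributing to Henry Cohn the one direction that $\delta(S)=0$ when $S$ is isometric to $S^*$. Your Stage 1 in fact supplies a correct, self-contained proof of exactly that direction: Poisson summation with $V=2\sqrt3$ gives $\sum_{v\in L}e^{-|v|^2/s^2}=\frac{\pi s^2}{2\sqrt3}+O(s^2e^{-cs^2})$, the dual-lattice error remains exponentially small under translation (it only acquires phases) and rotation, so the difference of the two sums is $o(1)$ and the liminf is $0$. The reduction to packings of maximal upper density is also sound, provided you use a form of Thue's theorem with controlled boundary terms and compare $\sum_{x\in S}e^{-|x|^2/s^2}$ with the counting function $N(r)=\#(S\cap B_r)$ by Abel summation; and Stage 2 is the standard equality analysis in Fejes T\'oth's cell inequality. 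So the skeleton is reasonable and the easy half of the conjecture is genuinely established by your argument.

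The genuine gap is Stage 3, and you have diagnosed it accurately yourself. Writing the deficit as $\sum_x\bigl[\tfrac{1}{2\sqrt3}\int_{V_x}e^{-|y|^2/s^2}\,dy-e^{-|x|^2/s^2}\bigr]$, the summands are not sign-definite: the first-order Taylor term is proportional to $\nabla\bigl(e^{-|x|^2/s^2}\bigr)\cdot(\mathrm{centroid}(V_x)-x)$, of size $O(1/s)$ per cell over $O(s^2)$ relevant cells, hence $O(s)$ in aggregate, which swamps the $O(1)$ quantity $\delta(S)$. For a lattice these terms vanish identically (the centroid of each cell is the lattice point) and Poisson summation confirms the cancellation, but for a general maximal-density packing nothing in your argument forces them to cancel to $o(1)$; as it stands the argument does not even yield the weaker inequality $\delta(S)\ge 0$ for all packings. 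The missing ingredient is a quantitative stability or rigidity theorem for the hexagonal packing strong enough to control these centroid/surface contributions uniformly in $s$ --- precisely the kind of ``$O(1)$ germ versus $O(s)$ fluctuation'' tension the paper flags in Section~\ref{subsec:topology}. Your closing remarks about the absence of a planar analogue of the block coding behind Theorem~\ref{thm:avoid-rat} and about linear-programming functions in dimensions $8$ and $24$ are sensible context but carry no logical weight. In short: the proposal is an honest and well-informed program, not a proof, and the conjecture remains open.
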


\noindent
{\bf Remark:} In private communication, Henry Cohn has shown that
when $S$ is related to $S^*$ by an isometry of $\RR^2$,
$\delta(S)$ is indeed 0.

\medskip

\begin{conjecture}\label{conj:disk-gap}
In the previous Conjecture,
``$\delta(S) > 0$'' can be replaced by ``$\delta(S) \geq 1$'' in the conclusion.
\end{conjecture}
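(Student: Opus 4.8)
The plan is to bootstrap from a proof of Conjecture~\ref{conj:disk-uniq} by \emph{quantifying} the deficit $\delta(S)$, in the same spirit that leads from Conjecture~\ref{conj:avoid-uniq} to Theorem~\ref{thm:gap}. We may assume $S$ is a saturated packing (removing centers only increases $\delta$, and if the saturation of $S$ is itself isometric to $S^*$ then $S$ is $S^*$ with at least one center deleted, so $\delta(S)\ge 1$ directly) and that $S$ is not isometric to $S^*$. First dispose of the case where $S$ has number density strictly less than $1/\sqrt{12}$: then $\sum_{(x,y)\in S^*}e^{-(x^2+y^2)/s^2}-\sum_{(x,y)\in S}e^{-(x^2+y^2)/s^2}$ grows like a positive multiple of $s^2$, so $\delta(S)=+\infty$. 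So assume $S$ has full density. By the classical Voronoi-cell area bound — each cell of a saturated unit-disk packing has area at least $\sqrt{12}$, and the cells of centers lying in $B(0,R)$ are contained in $B(0,R+O(1))$ — full density forces the total excess area $\mathrm{Exc}(S):=\sum_{p\in S}(\mathrm{area}(V_p)-\sqrt{12})$ to be $o(R^2)$ out to radius $R$; and if $\mathrm{Exc}(S)=+\infty$ (a positive-codimension defect such as a line of vacancies) then comparing center counts in large balls again gives $\delta(S)=+\infty$. This reduces matters to the case that $S$ is a saturated packing of full density with $0<\mathrm{Exc}(S)<\infty$.

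In this reduced regime I would use the hoped-for proof of Conjecture~\ref{conj:disk-uniq} together with a compactness argument in the space of saturated packings to conclude that, after an isometry, $S$ agrees with $S^*$ outside a bounded ``defective region'' $\Omega$ (the cells of excess area bounded away from $0$, plus a buffer of width $O(1)$), i.e. $S\setminus\Omega=S^*\setminus\Omega$. Then for each of the finitely many lattice points at stake, $e^{-(x^2+y^2)/s^2}\to 1$ as $s\to\infty$, yielding the clean identity $\delta(S)=|S^*\cap\Omega|-|S\cap\Omega|$, a nonnegative integer; in particular the $\liminf$ in the definition of $\delta$ is here an honest limit. It remains to show this integer is at least $1$, i.e. that a valid packing agreeing with the hexagonal one along $\partial\Omega$ cannot fit as many centers inside $\Omega$ as the hexagonal configuration does — a ``local optimality of the hexagonal packing given its own boundary data'' statement, which I would try to extract from the area bound applied to $\Omega$ padded by the buffer, with strictness coming from $S\ne S^*$ on $\Omega$. (An alternative that avoids the ``agrees outside a bounded region'' step is to build an injective near-radial transport $\phi\colon S\hookrightarrow S^*$ with $|\phi(p)|\le|p|+O(1)$ whose image omits at least one point $p_0$; because the Gaussian weight is slowly varying at scale $s$ the $O(1)$ slack costs only $o(1)$, so $\sum_S e^{-|p|^2/s^2}\le\sum_{S^*}e^{-|q|^2/s^2}-e^{-|p_0|^2/s^2}+o(1)$ and $\delta(S)\ge1$. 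Existence of $\phi$ omitting a point is a Hall/flow statement fed by the fact that $\mathrm{Exc}(S)>0$ depresses the radial count of $S$ below that of $S^*$ — which returns us to the same rigidity issue.)

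The main obstacle is that every step of the reduced argument rests on rigidity inputs at least as hard as Conjecture~\ref{conj:disk-uniq} itself: one needs (i) that full density plus finite excess forces $S$ to coincide with an isometric copy of $S^*$ outside a bounded set, and (ii) the quantitative dichotomy that a genuine defect wastes not a fraction of a disk but at least a whole one, $\mathrm{Exc}(S)\ge\sqrt{12}$. It is at (ii), and only there, that the constant $1$ in the conjecture is born, and this is a genuinely two-dimensional phenomenon: there is no analogue of the ``$\tfrac1k$'' slack of Theorem~\ref{thm:gap}, because what plays the role of hexagonal rigidity in that theorem is the mere translatability of $k\NN$. I expect (ii) to be the crux — it amounts to proving that one cannot perturb the hexagonal packing in a bounded region, keeping it saturated, without opening up room for at least one more disk somewhere, a statement about the perfect efficiency and jamming of the hexagonal packing. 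Secondary care is then needed to funnel all remaining full-density packings whose (density-zero but possibly unbounded) defect set is not localized into the $\delta(S)=+\infty$ case, so that the trichotomy above is exhaustive.
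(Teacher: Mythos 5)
This statement is a \emph{conjecture} in the paper; no proof is given there (the only things established in its vicinity are the remark, due to Cohn, that $\delta(S)=0$ when $S$ is isometric to $S^*$, and the observation that the constant $1$ is sharp). Your proposal is not a proof either, and to your credit you say so: both rigidity inputs (i) and (ii) on which the reduced argument rests are unproven, and (ii) --- that a genuine bounded defect in a saturated, full-density packing wastes at least one whole disk, i.e.\ that the integer $|S^*\cap\Omega|-|S\cap\Omega|$ is at least $1$ --- is essentially a restatement of the conjecture rather than a lemma one could feed into it. The analogy with Theorem~\ref{thm:gap} is suggestive but misleading about where the difficulty lives: in one dimension the gap of $1/k$ falls out of an explicit interleaving bound ($t_i\geq 2m+2i-1$), and nothing comparable is available in the plane; the "local optimality of the hexagonal packing given its own boundary data" statement you need is precisely the hard two-dimensional rigidity content.

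Two of your intermediate reductions also have gaps of their own. First, "full density plus $0<\mathrm{Exc}(S)<\infty$ implies $S$ agrees with an isometric copy of $S^*$ outside a bounded set" does not follow from compactness: finite total excess only makes the set of Voronoi cells with excess at least $\epsilon$ finite for each $\epsilon>0$, and a priori a saturated packing could have infinitely many cells whose excess tends to $0$ without ever vanishing, so that $S$ coincides with no hexagonal packing outside any ball. Second, the exhaustiveness of your trichotomy is asserted rather than argued: the Voronoi-cell count comparison in a ball of radius $R$ carries an $O(R)$ boundary error, which is the \emph{same} order as the count deficit produced by a codimension-one defect such as a line of vacancies, so "comparing center counts in large balls" does not by itself show $\delta(S)=+\infty$ for such configurations --- one needs a second-order analysis of the Gaussian regularizer of the kind hinted at in Section~\ref{sec:truncate}. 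The injective-transport alternative in your parenthesis is the more promising framework (it is the planar analogue of the argument actually used for Theorem~\ref{thm:gap}), but, as you note, showing the map must omit a point requires exactly the rigidity you are missing, so it does not close the gap.
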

That is, there is an appreciable efficiency-gap for 2-dimensional sphere-packing.

The dichotomy between $\delta(S) = 0$ and $\delta(S) \geq 1$ in Conjecture~\ref{conj:disk-gap}
might at first seem to contradict
the continuity of the summands as a function of the positions of the points;
if all the points move continuously,
won't the lim inf also change continuously?
The catch is that the lim inf can (and often does) diverge.
For instance, if one obtains $S$ from $S^*$
by translating a half-plane's worth of points by $\epsilon > 0$,
or dilating the configuration $S^*$ by a factor of $c > 1$,
then the lim inf diverges, no matter how close $\epsilon$ is to 0,
or how close $c$ is to 1.

Clearly the bound in Conjecture~\ref{conj:disk-gap} cannot be improved,
since removing a single point from $S^*$ gives a set $S$
for which the lim inf is exactly 1.

\bigskip

\noindent
{\sc Acknowledgments:} This work has benefited from conversations with
Tibor Beke, Ilya Chernykh, Henry Cohn, David Feldman, Boris Hasselblatt, 
Alex Iosevich, Sinai Robins, and Omer Tamuz.

\bigskip

\noindent
{\bf References}
%%% The References section should be re-done using BibTex or something like that. Right now it looks pretty bad. 
%%% I'll get rid of the overfull hboxes using hard line-breaks as a kludge, but that's just a temporary measure.

\medskip

\noindent
[Be] Vieri Benci, Emanuele Bottazzi, and Maura di Nasso,
``Elementary Numerosity and Measures'',
{\it J.\ Logic and Anal.} {\bf 6} (2014).
% \href{http://logicandanalysis.org/index.php/jla/article/view/212/93}{http://logicandanalysis.org/index.php/jla/article/view/212/93}.

\medskip

\noindent
[Bl] Andreas Blass, Mauro Di Nasso, Marco Forti,
``Quasi-selective ultrafilters and asymptotic numerosities'',
{\it Adv. in Math.\ }{\bf 231} (2012), 1462--1486; \\
\href{http://arxiv.org/abs/1011.2089}{{\tt http://arxiv.org/abs/1011.2089}}.

\medskip

\noindent
[Bo] Lewis Bowen and Charles Radin,
``Densest Packing of Equal Spheres in Hyperbolic Space'',
{\it Discrete Comput.\ Geom.\ }{\bf 29} (2003), 23--39.
% \href{http://www.ma.utexas.edu/users/radin/papers/hyper.pdf}{http://www.ma.utexas.edu/users/radin/papers/hyper.pdf}

\medskip

\noindent
[Ch] Ilya Chernykh, ``Non-Trivial Extension of Real Numbers'',
available at \\
\href{http://vixra.org/abs/1701.0617}{\tt http://vixra.org/abs/1701.0617}.

\medskip

\noindent
[Co] Henry Cohn, ``A Conceptual Breakthrough in Sphere Packing'',
{\it Notices of the AMS}, Volume 64, No.\ 2 (February 2017), 102--115.
% \href{http://www.ams.org/publications/journals/notices/201702/rnoti-p102.pdf}{http://www.ams.org/publications/journals/notices/201702/rnoti-p102.pdf}.

\medskip

\noindent
[Ka] Fred Katz, ``Sets and Their Sizes'', 
\href{https://arxiv.org/abs/math/0106100}{\tt https://arxiv.org/abs/math/0106100}.

\medskip

\noindent
[Ku] Greg Kuperberg, ``Notions of Denseness'', 
{\it Geom.\ Topol.\ }{\bf 4} (2000), 277--292.
% \href{https://arxiv.org/abs/math/9908003}{\tt https://arxiv.org/abs/math/9908003}.

\medskip

\noindent[Li] Daphne Der-Fen Liu,
``From rainbow to the lonely runner: a survey on coloring parameters of distance graphs,''
{\it Taiwanese J.~Math.}, 
Vol.~12 no.~4 (2008), pp.~851--871.

\noindent
[TW] Shiyi Tang and Chunlin Wang, 
``Rational-transcendental dichotomy of power series with a restriction on coefficients'',
{\it Monat.\ f\"ur Math.{}} {\bf 186}(2) (2017), 1--13. 

\end{document}